\newcommand{\diam}{\text{diam}}
\newcommand{\dB}{\mathds{B}}
\newcommand{\dC}{\mathds{C}}
\newcommand{\dD}{\mathds{D}}
\newcommand{\dH}{\mathds{H}}
\newcommand{\dR}{\mathds{R}}
\newcommand{\dZ}{\mathds{Z}}
\DeclareMathOperator{\Conf}{Conf}
\DeclareMathOperator{\Div}{Div}
\DeclareMathOperator{\dvol}{dvol}
\DeclareMathOperator{\Int}{Int}
\DeclareMathOperator{\Isom}{Isom}
\DeclareMathOperator{\Ker}{Ker}
\DeclareMathOperator{\PV}{P.V.}
\DeclareMathOperator{\Res}{Res}
\DeclareMathOperator{\Ric}{Ric}
\DeclareMathOperator{\Supp}{supp}
\DeclareMathOperator{\Vol}{Vol}
\newtheorem{theorem}{Theorem}[section]
\newtheorem{assumption}[theorem]{Assumption}
\newtheorem{lemma}[theorem]{Lemma}
\newtheorem{proposition}[theorem]{Proposition}
\theoremstyle{definition}
\newtheorem{definition}[theorem]{Definition}
\theoremstyle{remark}
\newtheorem{remark}{Remark}[section]
\newtheorem{example}{Example}[section]
\theoremstyle{remark}
\numberwithin{equation}{section}
\begin{document}
\title{Nonlocal Curvature and Topology of Locally Conformally Flat Manifolds}
\date{\today}

\author{Ruobing Zhang}

\address{Department of Mathematics, Stony Brook University, Stony Brook, NY, 11790}

\email{rbzhang@math.stonybrook.edu}

\begin{abstract}
In this paper, we study the geometry of locally conformally flat manifolds $(M^n,g)$ with positive scalar curvature. Schoen-Yau proved that its universal cover $(\widetilde{M^n},\tilde{g})$ is conformally embedded in $\mathbb{S}^n$ such that $M^n$ is a Kleinian manifold. Moreover, the limit set of the Kleinian group has Hausdorff dimension less than $\frac{n-2}{2}$. If additionally we 
assume that the nonlocal curvature $Q_{2\gamma}\geq0$ for some $1<\gamma<2$, 
then we will prove that the Hausdorff dimension of the limit set is in fact at most $\frac{n-2\gamma}{2}$. This upper bound is sharp. As applications, we obtain some  
geometric rigidity and classification theorems for the manifolds with $Q_{2\gamma}\geq0$ in low dimensions. 
 \end{abstract}

\maketitle

\tableofcontents

\section{Introduction}\label{s:introduction}
Compact locally conformally flat manifolds with positive scalar curvature can be viewed as Kleinian manifolds by Schoen-Yau's fundamental work in \cite{SY}. That is, if $(M^n,g)$ is a compact locally conformally flat manifold with $R_g> 0$, then the universal cover $(\widetilde{M^n},\tilde{g})$
can be conformally embedded in the standard sphere $(\mathbb{S}^n,g_1)$.
Moreover, $\pi_1(M^n)$
is isomorphic to a Kleinian group $\Gamma\leq\Conf(\mathbb{S}^n)$ such that \begin{equation}\widetilde{M^n}\cong\Omega(\Gamma)\equiv \mathbb{S}^n\setminus \Lambda,
 \end{equation}
where $\Lambda\equiv\Lambda(\Gamma)$ is the limit set of the Kleinian group $\Gamma$.
In \cite{SY}, Schoen-Yau also proved the following Hausdorff dimension estimate on the limit set $\Lambda$ in the above setting,
\begin{equation}
\dim_{\mathcal{H}}(\Lambda)<\frac{n-2}{2}.
\end{equation}
The above Hausdorff dimension estimate immediately gives homotopy vanishing and homology vanishing results, which are interesting topological obstructions for conformally flat manifolds with nonnegative scalar curvature  (see \cite{SY} for more details).

In this paper, we will generalize the above theory to the fractional setting. In conformal geometry, scalar curvature $R_g$ arises as the zeroth order term of the conformal Laplacian operator. More precisely, denote $J_g\equiv\frac{R_g}{2(n-1)}$, then
\begin{equation}
P_{2}\equiv -\triangle_g+\frac{n-2}{2}J_g.
\end{equation}
It is standard that the second order  conformal Laplacian operator $P_{2}$ satisfies the following conformal covariance property: For $n\geq 3$, let $\hat{g}=v^{\frac{4}{n-2}}g$ and let $\widehat{P}_2$ be the conformal Laplacian with respect to the conformal metric $\hat{g}$, then
\begin{equation}
\widehat{P}_2(u)=v^{-\frac{n+2}{n-2}}P_2(uv).\label{conf-laplacian}
\end{equation}
The fourth order analogy of $P_2$ is called {\textit{Paneitz operator}}, which is defined by
\begin{equation}
P_4(u)\equiv(-\triangle_g)^2u+\Div_g(4A_g
\langle\nabla_g u, e_j\rangle e_j-(n-2)J_g\nabla_g u)+\frac{n-4}{2}Q_4\cdot u,
\end{equation}
 where 
 \begin{equation}
 A_g\equiv\frac{1}{n-2}\Big(\Ric_g-J_g\cdot g\Big).
 \end{equation}
 The above $Q_4=(\frac{n-4}{2})^{-1}P_4(1)$ is called Branson's $Q$ curvature.
Similar to (\ref{conf-laplacian}), Paneitz operator has the following 
conformal covariance property: For $n\geq 5$ and $\hat{g}=v^{\frac{4}{n-4}}g$, then
\begin{equation}
\widehat{P}_4(u)=v^{-\frac{n+4}{n-4}} P_4(uv).
\end{equation}
Chang-Hang-Yang studied the covering geometry similar to what Schoen-Yau did (see more details in \cite{CHY}). 

In our paper, we focus on the fractional order conformally covariant operator $P_{2\gamma}$ and the corresponding $Q_{2\gamma}$ curvature for $1<\gamma<2$. The fundamental analytic theory for $P_{2\gamma}$ is developed by Chang-Gonz\'alez in their work \cite{CG}. 
Throughout our paper, without explicitly stated, the operator $P_{2\gamma}(g,g_+)$ is always defined on {\it a  locally conformally flat manifold $(M^n,g)$ with a hyperbolic filling-in $(\dH^{n+1}/\Gamma,g_+)$ with} $\Gamma\equiv\pi_1(M^n)$ and $\sec_{g_+}\equiv-1$.
See Section \ref{s:preliminaries} for more details about that. The main part of this paper is to study the covering geometry of compact locally conformally flat manifold $(M^n,g)$ with $R_g>0$ and $Q_{2\gamma}\geq0$
for $1<\gamma<2$.

The main focus of our paper is the geometric effects of the nonlocal curvature $Q_{2\gamma}$.
Specifically, we will study the topology and geometry of local conformally flat manifolds with $Q_{2\gamma}\geq0$.
The main theorem is the following sharp estimate on the Hausdorff dimension of the limit set.

\begin{theorem}
\label{t:dim-estimate}
Let $(M^n,g)$ ($n\geq 3$) be a compact locally conformally flat manifold with positive scalar curvature. Let $\Gamma\equiv\pi_1(M^n)$ and let $\Lambda(\Gamma)$ be the limit set of $\Gamma$, then the following holds:
\begin{enumerate} \item If $Q_{2\gamma}>0$ for some $\gamma\in(1,\max\{2,n/2\})$, then
\begin{equation}
\dim_{\mathcal{H}}(\Lambda(\Gamma))<\frac{n-2\gamma}{2}.\label{hausdorff-dimension-bound}
\end{equation}
\item If  $Q_{2\gamma}\geq 0$ for some $\gamma\in(1,\max\{2,n/2\})$, then
\begin{equation}
\dim_{\mathcal{H}}(\Lambda(\Gamma))\leq\frac{n-2\gamma}{2}.\label{sharp-hausdorff-dimension-bound}
\end{equation}
\end{enumerate}
\end{theorem}

\begin{remark}
Example \ref{e:hyperbolic} shows that the Hausdorff dimension upper bound \eqref{sharp-hausdorff-dimension-bound} is sharp.
\end{remark}

Combined with a theorem in \cite{QR}, an immediate application of the above Hausdorff dimension estimate (Theorem \ref{t:dim-estimate}) is an existence result for  fractional order Yamabe problem. 

\begin{theorem}
Let $(M^n,g)$ be a compact locally conformally flat manifold with $R_g>0$. Assume that $Q_{2\gamma}>0$ for some $\gamma\in(1,\max\{2,\frac{n}{2}\})$, then for every $\gamma'\in(0,\gamma]$, there exists a smooth Riemannian metric $\hat{g}$ which is conformal to $g$ such that $\widehat{Q}_{2\gamma'}\equiv1$.
\end{theorem}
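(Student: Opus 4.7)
The plan is to realize $\widehat{Q}_{2\gamma'}\equiv 1$ as the Euler--Lagrange equation of a fractional Yamabe-type functional and to run the standard three-step existence program: positivity of the Yamabe invariant, a strict inequality against the round sphere, and a concentration-compactness argument. By the conformal covariance of $P_{2\gamma'}$, setting $\hat g = v^{4/(n-2\gamma')}g$ one has
$$\widehat{Q}_{2\gamma'} = \tfrac{2}{n-2\gamma'}\, v^{-(n+2\gamma')/(n-2\gamma')}\, P_{2\gamma'}(v),$$
so the problem reduces to finding $v>0$ solving $P_{2\gamma'}(v) = \tfrac{n-2\gamma'}{2}\, v^{(n+2\gamma')/(n-2\gamma')}$. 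The natural energy to minimize is $\mathcal{E}_{2\gamma'}(u) = \int_M u\,P_{2\gamma'}(u)\,dV_g$ under the constraint $\int_M u^{2n/(n-2\gamma')}\,dV_g = 1$, whose infimum is the fractional Yamabe invariant $Y_{2\gamma'}(M,[g])$.

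First I would establish positivity of $Y_{2\gamma'}(M,[g])$ for every $\gamma'\in(1,\gamma]$. The scattering description of $P_{2\gamma'}$ developed in \cite{CG}, combined with $R_g>0$ and $Q_{2\gamma}>0$, makes $P_{2\gamma'}$ a positive self-adjoint operator on $(M^n,g)$; a Green's function / maximum-principle argument on the hyperbolic filling $(\Omega(\Gamma)\subset\mathbb{S}^n,\Gamma)$ then yields $Q_{2\gamma'}>0$ for every $\gamma'\in(1,\gamma]$ by monotonicity in the spectral parameter. Combined with a fractional Sobolev inequality on $M^n$, this gives $Y_{2\gamma'}(M,[g])>0$.

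The main obstacle is the strict sphere inequality $Y_{2\gamma'}(M,[g]) < Y_{2\gamma'}(\mathbb{S}^n)$. Following the Schoen--Yau template for the classical ($\gamma'=1$) case and its fractional adaptation in the spirit of Gonz\'alez--Qing, I would build a test function from a fractional Aubin--Talenti bubble on $\mathbb{S}^n$ centered at a point in $\Omega(\Gamma)$, averaged over the Kleinian group $\Gamma$ and projected down to $M^n$. Expanding the energy in the concentration parameter produces the sphere value plus a mass-type correction whose sign is governed by a Poisson-type integral involving the Patterson--Sullivan measure on $\Lambda(\Gamma)$. This correction is strictly negative precisely when $\dim_{\mathcal{H}}(\Lambda) < (n-2\gamma')/2$; since $\gamma'\leq\gamma$ gives $(n-2\gamma')/2 \geq (n-2\gamma)/2 > \dim_{\mathcal{H}}(\Lambda)$ by Theorem~\ref{t:dim-estimate}, the strict inequality follows. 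This is the heart of the argument and is precisely where the fractional Hausdorff dimension estimate plays its essential role.

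Existence and regularity then follow by standard means. Subcritical approximation $p \uparrow (n+2\gamma')/(n-2\gamma')$ produces smooth positive minimizers $v_p$ via compact fractional Sobolev embedding. The strict sphere inequality rules out bubbling via a Struwe-type concentration-compactness analysis, so $v_p$ converges in $H^{\gamma'}$ to a nonnegative weak solution $v\not\equiv 0$. A fractional maximum principle gives $v>0$, and elliptic regularity for the integral representation of $P_{2\gamma'}$ (cf. \cite{CG}) upgrades $v$ to a smooth positive function, yielding the desired conformal metric $\hat g$ with $\widehat{Q}_{2\gamma'}\equiv 1$.
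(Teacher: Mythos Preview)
The paper does not actually prove this theorem. It is stated in the introduction as ``an immediate application of Theorem~\ref{t:dim-estimate}'' and no further argument is given; the paper is implicitly invoking the existing solution theory for the fractional Yamabe problem on locally conformally flat Kleinian manifolds (cf.\ the references \cite{QR} and the Gonz\'alez--Qing framework), where the decisive hypothesis is precisely a bound of the form $\delta(\Gamma)=\dim_{\mathcal{H}}(\Lambda)<\tfrac{n-2\gamma'}{2}$. Theorem~\ref{t:dim-estimate} supplies $\dim_{\mathcal{H}}(\Lambda)<\tfrac{n-2\gamma}{2}\le\tfrac{n-2\gamma'}{2}$ for every $\gamma'\in(1,\gamma]$, and that is the entire content of the paper's ``proof''.

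Your proposal is aligned with this at the only point the paper actually argues: you correctly single out the Hausdorff dimension estimate as the input that drives the strict sphere inequality $Y_{2\gamma'}(M,[g])<Y_{2\gamma'}(\mathbb{S}^n)$. The three-step program you outline (positivity of the invariant, strict sphere inequality via a bubble test function with a mass-type correction controlled by the Poincar\'e exponent, then subcritical approximation and concentration--compactness) is exactly the Gonz\'alez--Qing scheme the paper is tacitly citing; you are simply writing out what the paper outsources to the literature. One caveat: your justification in Step~1 that $Q_{2\gamma'}>0$ for all $\gamma'\in(1,\gamma]$ by ``monotonicity in the spectral parameter'' is not clearly correct and is not how the cited works proceed. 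What is actually needed is positivity of the \emph{operator} $P_{2\gamma'}$, and in this Kleinian setting that follows from the spectral theory of the hyperbolic filling once $\delta(\Gamma)<\tfrac{n-2\gamma'}{2}$ (equivalently, from the bottom of the $L^2$-spectrum of $-\triangle_{g_+}$ lying above $\tfrac{n^2}{4}-\gamma'^2$), not from any monotonicity of $Q_{2\gamma'}$ in $\gamma'$. This does not affect the overall soundness of your outline.
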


The above Hausdorff dimension estimate  in effect leads to the following topological consequences. From now on, we assume that $\Gamma\equiv\pi_1(M^n)$ is {\it torsion-free} (see the discussion at the end of Section \ref{s:preliminaries} for the necessity of this assumption). 

For $n=3$, we have the following rigidity theorem. The proof will be given in 
Section \ref{s:critical-case}.

\begin{theorem}
\label{t:3d-rigidity-theorem}

 Let  $(M^3,g)$ be a compact locally conformally flat manifold with positive scalar curvature.
  Assume that $Q_3\geq0$ on $M^3$, then the following properties hold:
  \begin{enumerate}
  \item If $Q_3(x)>0$ for some $x\in M^3$, then  $(M^3,g)$ is conformal to the round sphere $(\mathbb{S}^3,g_1)$ of constant curvature $1$.
    
  \item If $Q_3\equiv 0$, then $(M^3,g)$ is isometric to the Riemannian product space $(\mathbb{S}^1\times\mathbb{S}^2,dt^2\oplus g_r)$, where $g_r$ is a round metric on $\mathbb{S}^2$ of Gaussian curvature $\frac{1}{r^2}$ for some $r>0$.
\end{enumerate}

\end{theorem}

\begin{remark}
Theorem \ref{t:3d-rigidity-theorem} shows that Property (2) in Theorem \ref{t:dim-estimate} still holds when $n=3$ and $\gamma=\frac{3}{2}$, i.e.
$\dim_{\mathcal{H}}(\Lambda)=0
$. In fact, in this case, $\Gamma$ must be elementary and $\Lambda$ is empty or contains exactly two points.  

\end{remark}

\begin{remark}
The rigidity result in Theorem \ref{t:3d-rigidity-theorem} does not hold in the general context of conformally compact Einstein manifolds.  See 
Example \ref{e:ads-schwarzschild} for a counterexample.
\end{remark}

\begin{theorem}\label{t:euler-number-rigidity}
Let $(X^4,g_+)$ be a conformally compact Einstein manifold with a conformal infinity $(M^3,[g])$ which is locally conformally flat. Assume that $(M^3,g)$ has positive scalar curvature and $Q_3(g,g_+)\geq0$, then
\begin{equation}
\chi(X^4)\geq 0.\label{nonnegative-euler}
\end{equation}
Moreover, if $\chi(X^4)=0$, then the following holds:
\begin{enumerate}
\item $Q_3(g,g_+)\equiv0$. 

\item $(X^4,g_+)$ is isometric to $(\dR^3\times\mathbb{S}^1,g_{-1})$ with $\sec_{g_{-1}}\equiv-1$. 

\item  $(M^3,g)$ is isometric to the Riemannian product $(\mathbb{S}^1\times\mathbb{S}^2,dt^2\oplus g_r)$, where $g_r$ is a round metric on $\mathbb{S}^2$ of Gaussian curvature $\frac{1}{r^2}$ for some $r>0$.
\end{enumerate}

\end{theorem}

\begin{proof}[Proof of Theorem \ref{t:euler-number-rigidity}]
Inequality \eqref{nonnegative-euler} directly follows from Chern-Gauss-Bonnet formula \eqref{chern-gauss-bonnet}. Next, if $\chi(X^4)=0$, then \eqref{chern-gauss-bonnet} implies that $|W|_{g_+}\equiv0$ on $(X^4,g_+)$ and $Q_3(g,g_+)\equiv0$ on $(M^3,g)$.
Therefore, $(X^4,g_+)$ is hyperbolic and the corresponding rigidity follows from case (2) in Theorem \ref{t:3d-rigidity-theorem}.

\end{proof}

When $n=4$ and $5$, 
we have the following topological rigidity.
\begin{theorem}\label{t:topological-rigidity}
Given $n=4$ or $5$, let $(M^n,g)$ be a compact locally conformally flat manifold with positive scalar curvature, if one of the following holds:
\begin{enumerate}
\item $n=4$,\ $Q_{2\gamma}\geq 0$ for some $1<\gamma<2$;

\item $n=5$, $Q_{2\gamma}\geq 0$ for some $\frac{3}{2}<\gamma<2$;

\item $n=5$, $Q_{2\gamma}>0$ for some $\frac{3}{2}\leq\gamma<2$;

\end{enumerate}
then 
$M^n$ is diffeomorphic to $\mathbb{S}^n$ or $\#_{k}(\mathbb{S}^1\times\mathbb{S}^{n-1})$ for some $k\in\dZ_+$ .

\end{theorem}

\begin{remark}
In Theorem \ref{t:topological-rigidity}, if $\pi_1(M^n)$ is non-elementary, then $\Gamma\equiv\pi_1(M^n)$ is isomorphic to a Schottky group.
\end{remark}

\begin{remark}
In case (2), (3) of Theorem \ref{t:topological-rigidity}, the lower bounds are optimal. See Example \ref{e:hyperbolic}.  \end{remark}

\begin{proof}
[Proof of Theorem \ref{t:topological-rigidity}] 
First, if $\Gamma=\{e\}$, then $M^n$ is diffeomorphic to $\mathbb{S}^3$ given that $\Gamma$ is torsion free.  So we consider the case that $\Gamma\equiv\pi_1(M^n)\neq \{e\}$
is elementary. Since $R_g>0$, it is a classical result by \cite{GL} that $M^n$ is not covered by $\mathbb{T}^n$. Therefore $\Gamma$
has to be an infinite cyclic group generated by a loxodromic element, and hence $M^n$ is diffeomorphic to $\mathbb{S}^1\times \mathbb{S}^{n-1}$. So Theorem \ref{t:topological-rigidity}
holds in this case.

Now we switch to the general case in which $\Gamma\equiv\pi_1(M^n)$ is non-elementary. In each of the cases $(1)$,  $(2)$ and $(3)$,
by Theorem \ref{t:dim-estimate}, we have that $\dim_{\mathcal{H}}(\Lambda(\Gamma))<1$. Furthermore, 
Patterson-Sullivan's theorem (see theorem \ref{t:patterson-sullivan}) implies that $\delta(\Gamma)=\dim_{\mathcal{H}}(\Lambda(\Gamma))<1$.
Since $\Gamma$
is non-elementary, $0<\delta(\Gamma)<1$.
Applying theorem 6.1 in \cite{izeki}, $M^n$ is diffeomorphic to $\#_k(\mathbb{S}^1\times\mathbb{S}^{n-1})$ for $k\in\dZ_+$.
\end{proof}

\section{Preliminaries}
\label{s:preliminaries}

\subsection{Basics in Kleinian Groups}
We start with some brief review on the basic concepts about Kleinian groups and their useful properties 
in conformal geometry.

Let $\Gamma\leq\Conf(\mathbb{S}^n)\cong\Isom(\dB^{n+1})$ be a Kleinian group which by definition gives a properly discontinuous action on a non-empty subdomain $\Omega(\Gamma)\subset \mathbb{S}^n$.
The notion of limit set is fundamental in the study of Kleinian groups.
\begin{definition}[Limit set] \label{d:limit-set} Let $\Gamma$ be a  Kleinian group, then the limit set is defined by
\begin{equation}
\Lambda(\Gamma)\equiv\{x\in(\mathbb{S}^n,g_1)|\exists\gamma_j\in\Gamma, \ y\in(\mathbb{S}^n,g_1),\ \text{s.t.}\ \lim\limits_{j\to\infty}d_{g_1}(\gamma_j(y), x)\to0\}.
\end{equation}
\end{definition}
Immediately, by definition, $\Lambda(\Gamma)$ is a $\Gamma$-invariant closed subset in $\mathbb{S}^n$. Moreover, if $\Gamma$ is co-compact, then 
\begin{equation}
\Lambda(\Gamma)=\partial\Omega(\Gamma)=\mathbb{S}^n\setminus\Omega(\Gamma).
\end{equation}

If $\Gamma$ acts freely on $\Omega(\Gamma)$, then the quotient space $\Omega(\Gamma)/\Gamma$ is a locally conformall                                                                                                    y flat manifold. 
Schoen-Yau's result in \cite{SY} proved that there are a large class of locally conformally flat manifolds of the form $\Omega(\Gamma)/\Gamma$. More precisely, let $(M^n,g)$ be locally conformally flat, if $R_g\geq0$, then
the universal cover $\widetilde{M^n}$
can be conformally embedded in $\mathbb{S}^n$.  

Next we introduce the concept of Poincar\'e exponent and its applications in the study of conformal geometry.
\begin{definition}
[Poincar\'e exponent]\label{d:poincare-series-on-sphere} Let $\Gamma$ be a Kleinian group. Given $s>0$, define the Poincar\'e series as follows, 
\begin{equation}
\mathcal{P}_{c}(s;x)\equiv \sum\limits_{\gamma\in\Gamma}\|\gamma'(x)\|^s,
\end{equation}
where $\|\gamma'(x)\|$ is the length of the conformal factor. Poincar\'e exponent is defined by
\begin{equation}
\delta(\Gamma)\equiv\inf\Big\{s>0\Big|\mathcal{P}_{c}(s;x)<\infty,\ \forall x\in\mathbb{S}^n\Big\}.
\end{equation}
\end{definition}
There are several equivalent definitions of Poincar\'e exponent. Now we describe Poincar\'e exponent on a hyperbolic space $(\dB^{n+1}, g_{-1},0^{n+1})$
with $\sec_{g_{-1}}\equiv-1$.
\begin{lemma}
Let $\Gamma\leq\Isom(\dB^{n+1})\cong\Conf(\mathbb{S}^n)$ be a Kleinian group and denote by $\delta(\Gamma)$ the Poincar\'e exponent in terms of Definition \ref{d:poincare-series-on-sphere}, then the following holds:

\begin{enumerate}
\item On $(\dB^{n+1},g_{-1},0^{n+1})$, let 
\begin{equation}
\mathcal{P}_{e}(s;0^{n+1})\equiv\sum\limits_{\gamma\in\Gamma}e^{-sd_{g_{-1}}(0^{n+1},\gamma\cdot 0^{n+1})},
\end{equation}
then we have that\begin{equation}
\delta(\Gamma)=\inf\Big\{s>0\Big|\mathcal{P}_{e}(s;0^{n+1})<\infty\Big\}.
\end{equation}

\item Let $\mathfrak{n}(R)\equiv\#\{\gamma\in\Gamma|d_{g_{-1}}(\gamma\cdot 0^{n+1},0^{n+1})\leq R \}$, then
\begin{equation}
\delta(\Gamma)=\limsup\limits_{R\to\infty}\frac{\log\mathfrak{n}(R)}{R}.
\end{equation}

\end{enumerate}
\end{lemma}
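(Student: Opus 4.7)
The plan is to establish (1) and (2) in order, with (2) deduced from (1) by a shelling (Abel-summation) argument. The starting point is the Poisson-kernel formula for a hyperbolic isometry: for $\gamma\in\Isom(\dB^{n+1})$ with $a\equiv\gamma^{-1}(0^{n+1})\in\dB^{n+1}$ and $x\in\dS^n$, the spherical conformal factor is
\begin{equation}
\|\gamma'(x)\|=\frac{1-|a|^2}{|x-a|^2},
\end{equation}
while $d_{g_{-1}}(0^{n+1},\gamma\cdot 0^{n+1})=\log\frac{1+|a|}{1-|a|}$, so $1-|a|^2\asymp e^{-d_{g_{-1}}(0^{n+1},\gamma\cdot 0^{n+1})}$ as $|a|\to 1$.

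For (1), I fix a base point $x_0\in\Omega(\Gamma)$. Since the orbit $\Gamma\cdot 0^{n+1}$ accumulates (radially) only on $\Lambda(\Gamma)$ and $x_0\notin\Lambda(\Gamma)$, the Euclidean distance $|x_0-a|$ is bounded below by some $c(x_0)>0$ for all but finitely many $\gamma\in\Gamma$; together with the trivial upper bound $|x_0-a|\leq 2$, this gives
\begin{equation}
\|\gamma'(x_0)\|\asymp e^{-d_{g_{-1}}(0^{n+1},\gamma\cdot 0^{n+1})}
\end{equation}
uniformly in $\gamma$. Hence $\mathcal{P}_{c}(s;x_0)$ and $\mathcal{P}_{e}(s;0^{n+1})$ converge simultaneously and share the same critical exponent. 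Independence of the critical exponent from the base point $x_0$, and the fact that the case $x_0\in\Lambda(\Gamma)$ is immaterial to the infimum defining $\delta(\Gamma)$, fall out of the same comparison.

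For (2), I partition $\Gamma$ into hyperbolic annular shells $\Gamma_k\equiv\{\gamma:k\leq d_{g_{-1}}(0^{n+1},\gamma\cdot 0^{n+1})<k+1\}$, so that $|\Gamma_k|=\mathfrak{n}(k+1)-\mathfrak{n}(k)$ and
\begin{equation}
e^{-s}\sum_{k=0}^\infty|\Gamma_k|e^{-sk}\leq\mathcal{P}_{e}(s;0^{n+1})\leq\sum_{k=0}^\infty|\Gamma_k|e^{-sk}.
\end{equation}
Setting $\alpha\equiv\limsup_{R\to\infty}\tfrac{\log\mathfrak{n}(R)}{R}$: if $s>\alpha$, then choosing $\epsilon>0$ with $\alpha+\epsilon<s$ gives $\mathfrak{n}(k)\leq C_\epsilon e^{(\alpha+\epsilon)k}$ and a geometric majorant forces convergence; if $s<\alpha$, there is a subsequence $k_j\to\infty$ with $\mathfrak{n}(k_j)\geq e^{(\alpha-\epsilon)k_j}$ for some $\alpha-\epsilon>s$, and an Abel-summation rewriting of $\sum_k(\mathfrak{n}(k+1)-\mathfrak{n}(k))e^{-sk}$ in terms of $\mathfrak{n}(k)$ itself forces divergence along that subsequence. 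Combined with (1), this yields $\delta(\Gamma)=\alpha$.

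The main delicate point lies in (1): the uniform lower bound on $|x_0-a|$ rests on the geometric fact that $\Gamma\cdot 0^{n+1}$ accumulates only on $\Lambda(\Gamma)$, without which one only obtains the pointwise inequality $\|\gamma'(x)\|\leq e^{d_{g_{-1}}(0^{n+1},\gamma\cdot 0^{n+1})}$, too weak to compare the two Poincar\'e series term-by-term. Once this uniformity is in hand, the shelling argument for (2) is routine.
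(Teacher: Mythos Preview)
The paper states this lemma without proof, treating it as a standard fact about Kleinian groups (it is immediately followed by examples). Your argument is the standard one and is correct. For (1), the Poisson-kernel identity $\|\gamma'(x)\|=(1-|a|^{2})/|x-a|^{2}$ with $a=\gamma^{-1}(0^{n+1})$, combined with the fact that the orbit $\Gamma\cdot 0^{n+1}$ accumulates on $\mathbb{S}^{n}$ only at $\Lambda(\Gamma)$, yields the uniform two-sided comparison $\|\gamma'(x_{0})\|\asymp e^{-d_{g_{-1}}(0^{n+1},\gamma\cdot 0^{n+1})}$ at any $x_{0}\in\Omega(\Gamma)$, which is exactly what is needed. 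For (2), the shelling and Abel-summation argument is routine and correctly carried out.

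One remark on the point you flag as delicate. The paper's definition of $\delta(\Gamma)$ literally requires $\mathcal{P}_{c}(s;x)<\infty$ for \emph{every} $x\in\mathbb{S}^{n}$, but this cannot be taken at face value: if $\gamma_{0}\in\Gamma$ is loxodromic with fixed point $p\in\Lambda(\Gamma)$, then the conformal factors $\|(\gamma_{0}^{k})'(p)\|=\mu^{k}$ (with $\mu\neq 1$) form a two-sided geometric sequence, so $\mathcal{P}_{c}(s;p)=\infty$ for every $s>0$. Read literally the definition would then force $\delta(\Gamma)=+\infty$, contradicting the paper's own example that a loxodromic cyclic group has $\delta(\Gamma)=0$. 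The intended reading is with $x\in\Omega(\Gamma)$, which is exactly the base point you work with; so your assertion that the case $x\in\Lambda(\Gamma)$ is ``immaterial'' is correct, though for this reason rather than because it falls out of the same comparison.
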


\begin{example}\label{e:elementary-group}
Let $\Gamma$ be elementary and torsion-free, namely, $|\Lambda(\Gamma)|\leq 2$. Then $\Gamma$ must be an infinite cyclic group generated by a loxodromic element or a parabolic group of rank $k$ for some $k\geq 1$. In the above two cases, $\delta(\Gamma)=0$, $k/2$ respectively. 
\end{example}

The above concept of Poicnar\'e exponent gives many interesting consequences in conformal geometry. To this end, we start with some analytic preliminaries. Let $(M^n,g)$ be a closed locally conformally flat manifold and assume that $\widetilde{M^n}$ is conformally embedded in $\mathbb{S}^n$. Therefore, there is some subdomain $\Omega(\Gamma)\subset\mathbb{S}^n$ such that  
$M^n=\Omega(\Gamma)/\Gamma$
where $\Gamma\cong\pi_1(M^n)$
is a Kleinian group.

By stereographic projection, the universal cover $\widetilde{M^n}$
can be viewed as a subdomain in $\dR^n$ with the Riemannian covering metric $\tilde{g}=e^{2w}g_0$. 
Since $R_{g_0}\equiv0$, the conformal change $g_0=e^{-2w}\tilde{g}$ gives the following natural elliptic equation, 
\begin{equation}
-\triangle_{\tilde{g}}(e^{-\frac{n-2}{2}w})+\frac{n-2}{2}J_{\tilde{g}}e^{-\frac{n-2}{2}w}=0,
\end{equation}
where $J_{\tilde{g}}\equiv\frac{R_{\tilde{g}}}{2(n-1)}$.
Cheng-Yau's gradient estimate gives that for every $x\in\dR^n\setminus \Lambda$ with $B_{4R}(x)\subset\dR^n\setminus\Lambda$,
\begin{equation}
\sup\limits_{y\in B_R(x)}|\nabla_{\tilde{g}}\log e^{-\frac{n-2}{2}w(y)}|\leq C(n,\tilde{g},R).
\end{equation}
As a corollary, we have the following
Harnack inequality, there exists $C(n,\tilde{g},R)>0$ such that for every $y,z\in B_{R}(x)$
\begin{equation}
C^{-1}\cdot e^{w(z)}\leq  e^{w(y)}\leq C\cdot  e^{w(z)}.\label{cy-harnack}
 \end{equation}

We give some basic properties of the conformal factors, which will be used frequently in the paper.

\begin{lemma}[\cite{CQY}]\label{l:c_0-estimate-on-factor}
If $M^n\cong\Omega(\Gamma)/\Gamma$ is closed, then there exists $K>0$ such that for every $x\in\Omega(\Gamma)$,
\begin{equation}
K^{-1}\cdot d_0^{-1}(x,\Lambda)\leq e^{w(x)}\leq K\cdot d_0^{-1}(x,\Lambda),
\end{equation}
where $d_0$ is the distance function with respect to Euclidean metric.
\end{lemma}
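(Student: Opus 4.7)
I plan to prove the two inequalities separately, relying on four ingredients: the conformal equivariance
\[
e^{w(\gamma y)}\,\|\gamma'(y)\|_0 = e^{w(y)}, \qquad \gamma\in\Gamma,
\]
coming from the $\Gamma$-invariance of $\tilde{g}=e^{2w}g_0$; the Möbius distortion identity
\[
|\gamma u - \gamma v|_0 = |u-v|_0 \cdot \|\gamma'(u)\|_0^{1/2}\|\gamma'(v)\|_0^{1/2},
\]
valid for every Möbius transformation of $\mathbb{R}^n\cup\{\infty\}$ (Liouville's theorem plus direct verification on inversions); co-compactness of the $\Gamma$-action, which produces a compact fundamental set $\mathcal{K}\subset\Omega(\Gamma)$ on which the continuous positive functions $e^w$ and $\rho\equiv d_0(\cdot,\Lambda)$ are bounded above and below; and the Harnack inequality \eqref{harnack}.

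For the lower bound $e^{w(x)}\geq K^{-1}/\rho(x)$, completeness of $(\Omega(\Gamma),\tilde{g})$ together with the uniform injectivity radius $\iota_0>0$ (inherited from compactness of $M^n$) yields a normal $\tilde{g}$-geodesic ball $B_{\tilde{g}}(x,\iota_0)\subset\Omega(\Gamma)$. Applying \eqref{harnack} at the scale where $e^w$ stays comparable to $e^{w(x)}$ shows that this $\tilde{g}$-ball contains a Euclidean ball $B_0(x,c\,e^{-w(x)})$ for some $c=c(n,\tilde{g})>0$; since that Euclidean ball necessarily avoids $\Lambda$, $\rho(x)\geq c\,e^{-w(x)}$.

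For the upper bound $e^{w(x)}\leq K/\rho(x)$, I write $x=\gamma(y)$ with $y\in\mathcal{K}$ and $\gamma\in\Gamma$, and let $q\in\Lambda$ be a nearest limit point to $y$, so $|y-q|_0=\rho(y)$. Combining the two identities above,
\[
e^{w(x)}\,|x-\gamma(q)|_0 \;=\; e^{w(y)}\,\rho(y)\cdot\bigl(\|\gamma'(q)\|_0/\|\gamma'(y)\|_0\bigr)^{1/2},
\]
and since $\gamma(q)\in\Lambda$ we have $\rho(x)\leq|x-\gamma(q)|_0$. On $\mathcal{K}$ the product $e^{w(y)}\rho(y)$ is uniformly bounded, so the estimate reduces to the uniform distortion control $\|\gamma'(q)\|_0\asymp\|\gamma'(y)\|_0$ as $\gamma$ ranges over $\Gamma$ and $(y,q)\in\mathcal{K}\times\Lambda$ ranges over nearest-point pairs.

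This distortion bound is the main obstacle, because the Euclidean factor $\|\gamma'\|_0$ blows up at $\gamma^{-1}(\infty)$ and so may vary wildly over $\mathbb{R}^n$ for a fixed $\gamma$. The cleanest remedy is to transfer the whole argument to the round sphere $(\mathbb{S}^n,g_1)$, writing $\tilde{g}=\phi^2 g_1$ and using the chord distance in $\mathbb{R}^{n+1}$: both identities above keep the same form with $\|\cdot\|_{g_1}$ in place of $\|\cdot\|_0$, but now $\|\gamma'\|_{g_1}$ is a globally smooth positive function on the compact sphere. Since $M^n$ is compact, $\Gamma$ contains no parabolics and is therefore convex-cocompact, so the convex hull $C(\Lambda)\subset\mathbb{B}^{n+1}$ admits a compact $\Gamma$-quotient. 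Standard hyperbolic-geometry considerations (nearest-point retraction onto $C(\Lambda)$, combined with compactness of $C(\Lambda)/\Gamma$) then yield a uniform upper and lower bound on $\|\gamma'(q)\|_{g_1}/\|\gamma'(y)\|_{g_1}$ as $\gamma$ varies and $(y,q)$ ranges over nearest-point pairs with $y\in\mathcal{K}$. Pulling back through the stereographic projection, and using that $\mathcal{K}\cup\Lambda$ lies in a bounded region of $\mathbb{R}^n$ once one normalizes so that $\infty\in\Omega(\Gamma)$, recovers the Euclidean statement.
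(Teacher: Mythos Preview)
The paper does not prove this lemma; it is simply quoted from \cite{CQY}. So there is no argument in the paper to compare against, and I evaluate your sketch on its own merits.

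Your lower bound is fine: the bounded-geometry/Harnack argument showing that a $\tilde g$-ball of fixed radius contains a Euclidean ball of radius $\asymp e^{-w(x)}$ is standard and correct.

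Your upper bound, however, has a genuine gap. The ratio $\|\gamma'(q)\|/\|\gamma'(y)\|$ is \emph{not} uniformly bounded when $q$ is pinned once and for all as the nearest limit point to $y\in\mathcal K$. On the sphere (ball model with center $o\in\dB^{n+1}$) one has
\[
\frac{\|\gamma'(q)\|_{g_1}}{\|\gamma'(y)\|_{g_1}}\;=\;\frac{|y-\gamma^{-1}(o)|^{2}}{|q-\gamma^{-1}(o)|^{2}},
\]
and for non-elementary $\Gamma$ the orbit $\Gamma\cdot o$ accumulates on every point of $\Lambda$, in particular on your fixed $q$. Choosing $\gamma_k\in\Gamma$ with $\gamma_k^{-1}(o)\to q$ sends this ratio to $+\infty$. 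The nearest-point retraction onto $C(\Lambda)$ and compactness of $C(\Lambda)/\Gamma$ do not prevent this; they say nothing about orbit points approaching the particular boundary point $q$ you have frozen.

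The repair is elementary but requires letting $q$ depend on $\gamma$. Work in $\dR^n$ with $\infty\in\Omega(\Gamma)$, set $a=\gamma^{-1}(\infty)\in\Omega(\Gamma)$, and note that for \emph{any} $q\in\Lambda$ your two identities give
\[
e^{w(x)}\rho(x)\;\le\;e^{w(x)}\,|x-\gamma(q)|\;=\;e^{w(y)}\,|y-q|\,\frac{|y-a|}{|q-a|}.
\]
If $|a|$ is large compared with $D_0:=\mathrm{diam}_0(\mathcal K\cup\Lambda)$, then $|y-a|/|q-a|\asymp 1$ for every $q\in\Lambda$ and any choice (e.g.\ $q$ nearest to $y$) works. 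If $|a|\le 2D_0$, pick instead $q\in\Lambda$ with $|q-a|\ge\tfrac12\,\mathrm{diam}_0(\Lambda)$; such a point exists because $\Lambda$ has at least two points (the elementary case being trivial), and then all three factors $e^{w(y)}$, $|y-q|$, $|y-a|/|q-a|$ are bounded in terms of $\mathcal K$ and $\Lambda$ alone. This closes the upper bound without any appeal to the convex core.
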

Next we give an integrability lemma for the conformal factor $e^w$.
\begin{lemma}\label{l:integrability-lemma-of-conformal-factor} Let $\Gamma$ be a Kleinian group and  denote by $\delta_c\equiv\delta(\Gamma)\geq 0$, then  $e^w\in L_{loc}^{p}(\dR^n)$ for every $0<p< n-\delta_c$.

\end{lemma}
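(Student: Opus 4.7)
The plan is to convert local $L^p$-integrability of $e^w$ into summation over $\Gamma$-orbits and then invoke the definition of the Poincar\'e exponent $\delta_c$. The basic input is the $\Gamma$-invariance of the metric $\tilde g = e^{2w} g_0$ on $\Omega(\Gamma)\subset\dR^n$: since $\tilde g$ descends to $M^n = \Omega/\Gamma$, one has $\gamma^*\tilde g = \tilde g$ for every $\gamma\in\Gamma$, which translates into the cocycle identity
\begin{equation}
e^{w(\gamma y)}\,\|\gamma'(y)\| \;=\; e^{w(y)}, \qquad \gamma\in\Gamma,\ y\in\Omega(\Gamma).
\end{equation}
Since $M^n$ is compact, I can choose a fundamental domain $F\subset\Omega$ with $\overline F$ compact in $\Omega$, hence bounded away from $\Lambda$; Lemma \ref{l:c_0-estimate-on-factor} then guarantees $e^{w}\leq C_F$ on $F$.

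Given any bounded ball $B\subset\dR^n$, decompose $B\cap\Omega = \bigsqcup_{\gamma\in\Gamma}(B\cap\gamma F)$ and perform the change of variable $x = \gamma(y)$, with Euclidean Jacobian $\|\gamma'(y)\|^n$. Combining this with the cocycle identity yields
\begin{equation}
\int_{B} e^{pw}\,dx \;=\; \sum_{\gamma\in\Gamma}\int_{\gamma^{-1}(B)\cap F} e^{pw(y)}\,\|\gamma'(y)\|^{\,n-p}\,dy \;\leq\; C_F^{\,p}\sum_{\gamma\in\Gamma}\int_{\gamma^{-1}(B)\cap F}\|\gamma'(y)\|^{\,n-p}\,dy.
\end{equation}
By a Koebe-type distortion estimate on the compact set $\overline F$, $\|\gamma'(y)\|$ is uniformly comparable to $\|\gamma'(y_F)\|$ for a fixed reference point $y_F\in F$, with a constant independent of $\gamma$. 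Each summand is then bounded by $C'_F\,|F|\,\|\gamma'(y_F)\|^{\,n-p}$. Under the hypothesis $p< n-\delta_c$, i.e.\ $n-p>\delta_c$, the Poincar\'e series $\sum_{\gamma}\|\gamma'(y_F)\|^{\,n-p}$ converges by Definition \ref{d:poincare-series-on-sphere} (after noting that Euclidean and spherical conformal factors are equivalent on bounded subsets of $\dR^n$ via stereographic projection), producing the required finite bound.

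The main obstacle is the uniform Koebe-type distortion on $\overline F$: one needs $\|\gamma'(y)\|\asymp\|\gamma'(y_F)\|$ for $y\in\overline F$ with constants independent of $\gamma\in\Gamma$. This holds because the isometric spheres (equivalently, the apparent poles) of the M\"obius transformations in $\Gamma$ accumulate only on $\Lambda$, so they remain at positive distance from $\overline F$ in the spherical metric; consequently $\log\|\gamma'(\cdot)\|$ has uniformly bounded gradient on $\overline F$. An alternative route that sidesteps the distortion estimate altogether is to use Lemma \ref{l:c_0-estimate-on-factor} to bound $e^{pw}\leq K^p d_0(\cdot,\Lambda)^{-p}$, cover a neighborhood of $\Lambda$ by shadows of $\Gamma$-translates of a fixed ball to obtain a Minkowski-type estimate $|\{d_0(x,\Lambda)<\varepsilon\}\cap B|\lesssim\varepsilon^{\,n-\delta}$ for every $\delta>\delta_c$, and finish with a layer-cake decomposition.
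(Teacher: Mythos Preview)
Your proposal is correct and follows essentially the same strategy as the paper: decompose the domain into $\Gamma$-translates of a relatively compact fundamental domain, use the cocycle identity together with the change-of-variables Jacobian to produce the factor $\|\gamma'\|^{\,n-p}$, and conclude by convergence of the Poincar\'e series for $n-p>\delta_c$. The paper performs the computation on $\mathbb{S}^n$ and handles both the bound $e^{w}\leq C_F$ on $F$ and (implicitly) the distortion of $\|\gamma'\|$ over $F$ via the Harnack inequality~(\ref{harnack}), whereas you invoke Lemma~\ref{l:c_0-estimate-on-factor} and an explicit Koebe-type argument; these are equivalent inputs.
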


\begin{proof}

Let $\psi:\mathbb{S}^n\setminus \{N\}\rightarrow\dR^n$ be the stereographic projection.
First, we show that for every $0<p<n-\delta_c$, $e^{\tilde{w}}\equiv e^{w\circ\psi}\cdot\Big(\frac{1+|x|^2}{2}\Big)\in L^p(\mathbb{S}^n)$. In fact, for every $\gamma\in\Gamma$ and for every $\tilde{x}\in\mathbb{S}^n\setminus\Lambda$,
\begin{equation}
|\gamma'(\tilde{x})|\cdot e^{\tilde{w}(\gamma\cdot\tilde{x})}=e^{\tilde{w}(\tilde{x})}.
\end{equation}

Let $K\subset \Omega(\Gamma)$
be a bounded subset such that $F\subset K$. By (\ref{cy-harnack}), there exists some positive constant $C_0(\tilde{g},\diam_{\tilde{g}}(K),n)>0$ such that that for every $\tilde{x},\tilde{y}\in K$,
\begin{equation}
C_0^{-1}\cdot e^{\tilde{w}(\tilde{x})}\leq e^{\tilde{w}(\tilde{y})}\leq C_0\cdot e^{\tilde{w}(\tilde{x})}.\label{harnack}
\end{equation}

Now fix a fundamental domain $F\subset \Omega(\Gamma)$, then the integral over $\mathbb{S}^n$ can be reduced to the following sum,
\begin{eqnarray}
\int_{\Omega(\Gamma)}e^{p\cdot w(\tilde{x})}dv_{g_1}(\tilde{x})&=&\sum\limits_{\gamma\in\Gamma}\int_{\gamma\cdot F}e^{p\cdot w(\gamma\cdot\tilde{x})}dv_{g_1}(\gamma\cdot\tilde{x})\nonumber\\
&=&\sum\limits_{\gamma\in\Gamma}\int_{F}|\gamma'(\tilde{x})|^{n-p}\cdot e^{p\cdot w(\tilde{x})}dv_{g_1}(\tilde{x}).
\end{eqnarray}
Harnack inequality (\ref{harnack}) implies that for fixed $\tilde{x}_0\in F$, we have that for every $\tilde{x}\in F$,
\begin{equation}
e^{\tilde{w}(\tilde{x})}\leq C_0\cdot e^{\tilde{w}(\tilde{x}_0)}.
\end{equation}
Therefore, 
\begin{equation}
\int_{\Omega(\Gamma)}e^{p\cdot w(\tilde{x})}dv_{g_1}(\tilde{x})\leq C_0^p\cdot\Vol_{\tilde{g}}(F)\cdot \sum\limits_{\gamma\in\Gamma}|\gamma'(\tilde{x}_0)|^{n-p}.
\end{equation}
Since $p<n-\delta_c$, by the definition of Poincar\'e exponent, the above integral is finite. 

\begin{equation}
\int_E e^{p\cdot w(x)}dv_0(x)=\int_{\psi^{-1}(E)}e^{p\cdot\tilde{w}(\tilde{x})}\cdot\Big(\frac{1+|x|^2}{2}\Big)^{p+n}dv_1(\tilde{x})<\infty. 
\end{equation}

\end{proof}

Now we discuss the measure-theoretic properties of Kleinian groups.
\begin{definition}
[Hausdorff Content and Hausdorff Dimension]
Let $S\subset\dR^n$ and $d\in[0,+\infty)$, then the $d$-dimensional Hausdorff content of $S$ is  defined by
\begin{equation}
C_{\mathcal{H}}^d(S)\equiv\inf\Big\{\sum_i r_i^d:\text{ there is a cover of } S\text{ by balls with radii }r_i>0\Big\}.\end{equation}
The Hausdorff dimension of $S$ is defined by
\begin{equation}
\dim_{\mathcal{H}}(S)\equiv\inf\{d\geq0|C_{\mathcal{H}}^d(S)=0\}.
\end{equation}
\end{definition}

If $\Gamma$ is non-elementary,  Patterson-Sullivan characterized Poincar\'e exponent by the Hausdorff dimension of the limit set $\Lambda(\Gamma)$.
\begin{theorem}[Patterson-Sullivan]\label{t:patterson-sullivan}
Let $\Gamma$ be a non-elementary Kleinian group, then
\begin{equation}
\delta(\Gamma)=\dim_{\mathcal{H}}(\Lambda(\Gamma)).
\end{equation}

\end{theorem}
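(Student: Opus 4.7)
The plan is to establish both inequalities $\dim_{\mathcal{H}}(\Lambda(\Gamma)) \leq \delta(\Gamma)$ and $\dim_{\mathcal{H}}(\Lambda(\Gamma)) \geq \delta(\Gamma)$ by combining the hyperbolic orbit-counting formulation of $\delta(\Gamma)$ from the preceding lemma with a Patterson-Sullivan conformal density on the limit set.

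For the upper bound, I would fix any $s > \delta(\Gamma)$, so that the hyperbolic Poincar\'e series $\mathcal{P}_e(s;0^{n+1})$ converges. For each $\gamma \in \Gamma$, define the shadow $S(\gamma)\subset \mathbb{S}^n$ as the radial projection to $\partial\dB^{n+1}\cong\mathbb{S}^n$ of a fixed-radius hyperbolic ball centered at $\gamma \cdot 0^{n+1}$. A standard ball-model computation gives
\[
\diam_{g_1}\bigl(S(\gamma)\bigr)\leq C\, e^{-d_{g_{-1}}(0^{n+1},\, \gamma\cdot 0^{n+1})},
\]
with $C$ depending only on $n$ and the chosen radius. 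Every point of $\Lambda(\Gamma)$ is hit by infinitely many such shadows, so for each $R>0$ the family $\bigl\{S(\gamma) : d_{g_{-1}}(0^{n+1},\gamma\cdot 0^{n+1})\geq R\bigr\}$ covers $\Lambda(\Gamma)$, yielding
\[
C^s_{\mathcal{H}}(\Lambda(\Gamma))\leq C^s \sum_{\gamma :\, d_{g_{-1}}(0^{n+1},\gamma\cdot 0^{n+1})\geq R} e^{-s d_{g_{-1}}(0^{n+1},\gamma\cdot 0^{n+1})} \xrightarrow{R\to\infty} 0.
\]
Letting $s\downarrow \delta(\Gamma)$ then gives $\dim_{\mathcal{H}}(\Lambda(\Gamma)) \leq \delta(\Gamma)$.

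For the reverse inequality I would construct a Patterson-Sullivan measure. Set
\[
\mu_s \equiv \frac{1}{\mathcal{P}_e(s;0^{n+1})}\sum_{\gamma\in\Gamma} e^{-sd_{g_{-1}}(0^{n+1},\gamma\cdot 0^{n+1})}\,\delta_{\gamma\cdot 0^{n+1}}
\]
as a probability measure on $\overline{\dB^{n+1}}$, where $\delta_p$ denotes the Dirac mass at $p$. If the Poincar\'e series diverges at $s=\delta(\Gamma)$, take any weak-$*$ subsequential limit $\mu$ as $s\downarrow \delta(\Gamma)$; if it converges, first multiply the summands by Patterson's slowly-growing weight so that the modified series diverges at the critical exponent. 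Non-elementarity of $\Gamma$ forces $\mu$ to be a non-atomic probability measure supported on $\Lambda(\Gamma)$, and a direct computation using the transformation law of Busemann functions shows that $\mu$ is a $\delta(\Gamma)$-conformal density: for every $\gamma\in\Gamma$,
\[
\gamma_*\mu = \|\gamma'\|^{\delta(\Gamma)} \mu.
\]

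The crucial estimate is Sullivan's shadow lemma: for all sufficiently small $r>0$ and all $x\in\Lambda(\Gamma)$,
\[
C^{-1} r^{\delta(\Gamma)} \leq \mu\bigl(B_r(x)\bigr) \leq C\, r^{\delta(\Gamma)},
\]
proved by realizing $B_r(x)$ as the shadow of a fixed-size hyperbolic ball $B$ near the geodesic ray from $0^{n+1}$ to $x$, translating $B$ by a suitable $\gamma\in\Gamma$ so that $\gamma(B)$ has uniformly bounded hyperbolic distance from $0^{n+1}$, and then using conformality of $\mu$ together with its non-triviality on compact subsets of $\dB^{n+1}$. The upper half of the shadow lemma combined with Frostman's mass distribution principle gives $\dim_{\mathcal{H}}(\Lambda(\Gamma)) \geq \delta(\Gamma)$. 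The hard part is the convergent case of the construction: Patterson's slowly-growing weight must be engineered carefully so that divergence is induced at $s=\delta(\Gamma)$ without destroying the conformality rule in the weak-$*$ limit.
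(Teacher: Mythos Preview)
The paper gives no proof of this theorem; it is simply quoted as the classical Patterson--Sullivan result and used as a black box. Your sketch is the standard argument and is correctly outlined for the class of groups actually arising in the paper.

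One point deserves care in your upper bound. The claim that every $x\in\Lambda(\Gamma)$ lies in infinitely many shadows $S(\gamma)$ is only automatic for \emph{conical} limit points: if $\gamma_j\cdot 0^{n+1}\to x$ horospherically, the hyperbolic distance from $\gamma_j\cdot 0^{n+1}$ to the ray $[0^{n+1},x)$ may tend to infinity, and the fixed-radius shadows need not cover $x$. For arbitrary non-elementary $\Gamma$ the correct general statement (Bishop--Jones) is $\delta(\Gamma)=\dim_{\mathcal{H}}(\Lambda_c(\Gamma))$, where $\Lambda_c$ is the conical limit set, and there exist examples with $\dim_{\mathcal{H}}(\Lambda(\Gamma))>\delta(\Gamma)$. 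In the paper's setting the groups are convex co-compact, so $\Lambda(\Gamma)=\Lambda_c(\Gamma)$ and your shadow covering goes through. The lower bound via the Patterson--Sullivan density and Sullivan's shadow lemma is exactly the classical route and needs no modification.
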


The following lemma is rather standard and it is the foundation of the Hausdorff dimension estimate in our paper. The proof is given in the paper \cite{CHY}. 
\begin{lemma}
\label{l:hausdorff-dim-estimate}
Let $K\subset\dR^n$ be compact. Denote by 
$d_0(x,K)\equiv\inf\{|x-y|:y\in K\}$ for $x\in\dR^n$. Assume that for some $R>0$ and $\alpha\geq 1$, we have $K\subset B_R(0^n)$ and
\begin{equation}
\int_{B_R(0^n)\setminus K}d_0(x,K)^{-\alpha}dx<\infty,
\end{equation}
then $\dim_{\mathcal{H}}(K)\leq n-\alpha$. In addition, if $\alpha\geq n$,
then $K=\emptyset$.
\end{lemma}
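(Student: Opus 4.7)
The plan is to translate the integrability hypothesis into a Minkowski-type estimate for $K$ which then controls the Hausdorff content. The role of the threshold $\alpha \ge 1$ in the hypothesis is precisely to exclude sets of positive Lebesgue measure, and this is where the main subtlety of the argument lies.

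First I would establish a dyadic shell estimate. For $t>0$ set $A_t := \{x\in B_R : t<d_0(x,K)\le 2t\}$. On $A_t$ the integrand satisfies $d_0(x,K)^{-\alpha}\ge (2t)^{-\alpha}$, hence $|A_t|\le (2t)^\alpha C_0$, where $C_0$ denotes the (finite) total integral. Summing over the dyadic decomposition $N_{t_0}(K)\setminus K=\bigsqcup_{k\ge 0}A_{t_0/2^{k+1}}$ yields the Minkowski-type bound
\[
|N_{t_0}(K)\setminus K|\le C\,t_0^{\alpha}.
\]
Next I would show $|K|=0$ by a Fubini slicing argument. Suppose $|K|>0$. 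For $x'\in\RR^{n-1}$ write $K_{x'}:=\{t\in\RR:(x',t)\in K\}$; by Fubini the set $\{x':|K_{x'}|_1>0\}$ has positive $(n-1)$-dimensional Lebesgue measure. For any such $x'$ the slice $K_{x'}$ is a compact nonempty subset strictly contained in $(-\sqrt{R^2-|x'|^2},\sqrt{R^2-|x'|^2})$; setting $a:=\inf K_{x'}$, we have $(x',a)\in K$, hence $d_0((x',t),K)\le |t-a|$ for $t<a$. But
\[
\int_{a-\varepsilon}^{a}d_0((x',t),K)^{-\alpha}\,dt\ge \int_0^{\varepsilon}s^{-\alpha}\,ds=+\infty \quad(\alpha\ge 1),
\]
and by Fubini this forces $C_0=\infty$, contradicting the hypothesis.

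With $|K|=0$ the dimension bound follows from a covering count. Let $\{x_i\}_{i=1}^N$ be a maximal $\varepsilon$-separated subset of $K$; the balls $B_{\varepsilon/2}(x_i)$ are pairwise disjoint and each lies in $N_{\varepsilon/2}(K)$, so
\[
N\,c_n(\varepsilon/2)^n\le |N_{\varepsilon/2}(K)\setminus K|\le C(\varepsilon/2)^\alpha,
\]
giving $N\le C'\varepsilon^{\alpha-n}$. Since $\{B_\varepsilon(x_i)\}$ covers $K$, for any $d>n-\alpha$,
\[
C_{\mathcal{H}}^d(K)\le N\varepsilon^d\le C'\varepsilon^{d-(n-\alpha)}\to 0 \quad(\varepsilon\to 0),
\]
and hence $\dim_{\mathcal{H}}(K)\le n-\alpha$. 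For the addendum $\alpha\ge n\Rightarrow K=\emptyset$: if $K\ne\emptyset$, pick $x_0\in\partial K$ and a sequence $y_k\notin K$ with $y_k\to x_0$. Letting $r_k:=d_0(y_k,K)\to 0$, the $1$-Lipschitz property of $d_0(\cdot,K)$ gives $d_0\in [r_k/2,\,3r_k/2]$ on $B_{r_k/2}(y_k)\subset B_R\setminus K$, so $\int_{B_{r_k/2}(y_k)}d_0^{-\alpha}\ge c\,r_k^{n-\alpha}\ge c>0$ when $\alpha\ge n$; extracting a subsequence with disjoint balls gives $C_0=\infty$.

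The main obstacle is the Fubini step that rules out $|K|>0$: this is precisely where the threshold $\alpha\ge 1$ becomes essential, since without $|K|=0$ the above covering count fails to yield a useful Hausdorff bound. The remaining steps are standard measure-theoretic bookkeeping once the shell estimate is in hand.
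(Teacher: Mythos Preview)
Your argument is correct. The paper itself does not prove this lemma but simply refers to \cite{CHY}; the approach you outline---Minkowski-type tubular volume bound, then a packing/covering count---is exactly the standard route used there, so there is no meaningful difference to report.

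Two minor remarks. First, the dyadic shell decomposition is unnecessary: since $d_0\le t_0$ on $N_{t_0}(K)\setminus K$, one has directly
\[
|N_{t_0}(K)\setminus K|\le t_0^{\alpha}\int_{N_{t_0}(K)\setminus K} d_0(x,K)^{-\alpha}\,dx\le C_0\,t_0^{\alpha},
\]
which already suffices for the covering step. Second, in your Fubini argument you only use that $K_{x'}\ne\emptyset$ for a positive-measure set of $x'$, not that $|K_{x'}|_1>0$; this is still implied by $|K|>0$, so the logic is fine, but the stronger hypothesis on the slices is not actually invoked. The extraction of disjoint balls in the final step (for $\alpha\ge n$) works because each $B_{r_k/2}(y_k)$ stays at distance $\ge r_k/2$ from $K$, hence from $x_0$, so later balls can be chosen inside $B_{r_{k_1}/4}(x_0)$ and are automatically disjoint from earlier ones.
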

Notice that, with Kleinian group action, 
Lemma \ref{l:integrability-lemma-of-conformal-factor} is a converse of lemma \ref{l:hausdorff-dim-estimate}.

Now we end this section by stating a classical theorem by Schoen-Yau (\cite{SY}), which has been mentioned in Section \ref{s:introduction}.
\begin{theorem} 
Let $(M^n,g)$ be a closed locally conformally flat manifold with $R_g\geq 0$, then its universal $\widetilde{M^n}$ can be conformally embedded in $\mathbb{S}^n$. Furthermore,
$\dim_{\mathcal{H}}(\Lambda(\Gamma))\leq\frac{n-2}{2}
$,
where $\Gamma\cong\pi_1(M^n)$ and $\Lambda(\Gamma)$ is the limit set on $\mathbb{S}^n$.
 \end{theorem}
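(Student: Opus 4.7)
The argument falls naturally into two parts: constructing the conformal embedding $\widetilde{M^n}\hookrightarrow\mathbb{S}^n$, and bounding $\dim_{\mathcal{H}}(\Lambda(\Gamma))$.

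For the embedding, I would use local conformal flatness to assemble a developing map $D:\widetilde{M^n}\to\mathbb{S}^n$ equivariant under a holonomy representation $\rho:\pi_1(M^n)\to\Conf(\mathbb{S}^n)$: each point of $M^n$ admits a conformal chart onto an open subset of $\mathbb{S}^n$, and a standard monodromy gluing assembles these into a conformal local diffeomorphism. Kuiper's theorem then provides a dichotomy, forcing either injectivity of $D$ or $\widetilde{M^n}=\mathbb{S}^n$. To rule out non-trivial self-overlap in the remaining case one invokes $R_g\geq 0$ (reducing to $R_g>0$ by a conformal perturbation if needed): coercivity of $L_g=-\triangle_g+\frac{n-2}{4(n-1)}R_g$ produces a positive Green's function $G$ on $M^n$; pulling it back to $\widetilde{M^n}$ and conformally changing $\tilde g$ by $G^{4/(n-2)}$ yields a scalar-flat metric on $\widetilde{M^n}$ off the poles, and the maximum principle in this metric separates distinct deck-transformation orbits, giving injectivity of $D$.

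For the dimension bound, work in the image $\Omega(\Gamma)\subset\dR^n$ via stereographic projection, so that $\tilde g=e^{2w}g_0$. The function $u\equiv e^{-\frac{n-2}{2}w}$ realizes the conformal change $g_0=u^{4/(n-2)}\tilde g$, so $L_{\tilde g}u=0$ and, by Lemma \ref{l:c_0-estimate-on-factor}, $u(x)\asymp d_0(x,\Lambda)^{\frac{n-2}{2}}$. Let $G^M(\cdot,\pi(y_0))$ be the positive Green's function of $L_g$ on compact $M^n$ (it exists and is positive because $R_g\geq 0$), and lift it to $\tilde G(x):=G^M(\pi(x),\pi(y_0))$, a $\Gamma$-invariant positive function on $\Omega\setminus\Gamma\cdot y_0$ with Green-type singularities at each $\gamma\cdot y_0$. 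By the conformal covariance of $L_g$ under $g_0=u^{4/(n-2)}\tilde g$, the function
\begin{equation}
H(x):=u(x)^{-1}\,\tilde G(x)
\end{equation}
is Euclidean-harmonic on $\Omega\setminus\Gamma\cdot y_0$ with a Newtonian singularity at each $\gamma\cdot y_0$. A local computation using the standard asymptotic $\tilde G(x)\asymp d_{\tilde g}(x,\gamma y_0)^{-(n-2)}\asymp e^{-(n-2)w(\gamma y_0)}|x-\gamma y_0|^{-(n-2)}$ shows that the Riesz mass of $H$ at $\gamma\cdot y_0$ is
\begin{equation}
m_\gamma\ \asymp\ e^{-\frac{n-2}{2}w(\gamma y_0)}\ \asymp\ d_0(\gamma y_0,\Lambda)^{\frac{n-2}{2}}\ \asymp\ \|\gamma'(y_0)\|^{\frac{n-2}{2}}.
\end{equation}
Evaluating $H$ at any fixed $x_0\in\Omega$ off the orbit $\Gamma\cdot y_0$ (where $|x_0-\gamma y_0|$ has a uniform positive lower bound, since $\Gamma\cdot y_0$ accumulates only on $\Lambda$), and using that $\tilde G$ is bounded on a fundamental domain, forces $\sum_\gamma \|\gamma'(y_0)\|^{\frac{n-2}{2}}<\infty$, i.e.\ $\delta(\Gamma)\leq\frac{n-2}{2}$. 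Feeding this back through Lemmas \ref{l:integrability-lemma-of-conformal-factor}, \ref{l:c_0-estimate-on-factor}, and \ref{l:hausdorff-dim-estimate} gives $\dim_{\mathcal{H}}(\Lambda)\leq \delta(\Gamma)\leq\frac{n-2}{2}$.

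The main technical obstacle is the bookkeeping in the Riesz-decomposition step: one must identify the weight $\|\gamma'(y_0)\|^{(n-2)/2}$ at every pole $\gamma\cdot y_0$ with uniform constants independent of $\gamma$, and simultaneously argue that the bounded harmonic remainder carried by the boundary $\Lambda$ cannot cancel this positive mass. The positivity of the pull-back Green's function, which is precisely where $R_g\geq 0$ is essential, is what guarantees the uniform lower bounds on $m_\gamma$ and makes the identification with the Poincaré series rigorous.
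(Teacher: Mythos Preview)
The paper does not prove this statement; it is quoted as the classical Schoen--Yau theorem and attributed to \cite{SY} without argument. So there is no in-paper proof to compare against, and your sketch should be judged on its own merits as a reconstruction of the Schoen--Yau argument.

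The overall architecture is right, and your identification of the Riesz masses $m_\gamma\asymp\|\gamma'(y_0)\|^{(n-2)/2}$ via the conformal transformation law for Green's functions is exactly the mechanism that drives the bound. However, the step ``evaluating $H$ at $x_0$ \dots\ forces $\sum_\gamma\|\gamma'(y_0)\|^{(n-2)/2}<\infty$'' has a genuine gap. Riesz decomposition on the domain $\Omega$ gives only
\[
H(x_0)\ \ge\ \sum_{\gamma}m_\gamma\,G_\Omega(x_0,\gamma\cdot y_0),
\]
with $G_\Omega$ the Green function of $\Omega$, not the Newton kernel. Since the orbit $\gamma\cdot y_0$ accumulates on $\Lambda=\partial\Omega$, one has $G_\Omega(x_0,\gamma\cdot y_0)\to 0$, and no uniform positive lower bound is available; the uniform upper bound on $|x_0-\gamma\cdot y_0|$ that you invoke is therefore not enough. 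To repair this you must first show that $\Lambda$ is polar (zero Newtonian capacity), so that the positive superharmonic function $H$ extends across $\Lambda$ and the comparison can be made against the Newton kernel on a full ball. This polarity step is a separate consequence of $R_g\ge 0$ in Schoen--Yau's work and is precisely the ``bounded harmonic remainder carried by $\Lambda$'' that you allude to in your last paragraph but do not actually establish.

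As a secondary remark, your description of the injectivity step is loose: Kuiper's theorem does not by itself give the dichotomy you state, and the phrase ``the maximum principle in this metric separates distinct deck-transformation orbits'' compresses the real content of the Schoen--Yau injectivity proof to the point of obscuring it. That part should be either fleshed out or simply cited.
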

 \begin{remark}
 In the above theorem, if the assumption  $R_g\geq 0$ is replaced with $R_g>0$, then correspondingly we have $\dim_{\mathcal{H}}(\Lambda(\Gamma))<\frac{n-2}{2}$.
 \end{remark}

\subsection{Fractional GJMS Operator and Fractional Order Curvature}
\label{ss:fractiona-operator}

Applying scattering theory on 
conformally compact Einstein manifolds developed by Graham-Zworski (\cite{GZ}), 
Chang-Gonz\'alez defined the notation of fractional GJMS operator in their joint paper \cite{CG}. 
In this section, we briefly review the definition of fractional GJMS operator and the corresponding fractional $Q$ curvature.
We start with the basic notions for defining fractional GJMS operators. Let $(X^{n+1},M^n,g_+)$ be a conformally compact Einstein manifold with conformal infinity $(M^n,[h])$ which satisfies the following:

$(1)$ $M^n=\partial X^{n+1}$,

$(2)$ $g_+$ is  complete on $\Int(X^{n+1})$ such that $\Ric_{g_+}\equiv-ng_+$,

$(3)$ there exists a smooth defining function of $M^n$, denoted by $\rho$, such that $\rho^2g_+$ is compact on $\overline{X^{n+1}}$ with $\rho^2g_+|_{TM^n}\in[h]$
and
\begin{equation}
\begin{cases}
\rho(x)>0, \ x\in \Int(X^{n+1}),\\
\rho(x)=0, \ x\in M^n,\\
|\nabla\rho|(x)\neq0,\ x\in M^n.\\
\end{cases}
\end{equation}
The following fact is standard (see \cite{Graham} or \cite{Lee}).
\begin{lemma}[Geodesic defining function]\label{l:geodesic-def-funct}
Let $(M^n,[h])$ be the conformal infinity, then for every $h_0\in[h]$ there exists a unique defining function $r$ in a neighborhood of $M^n$ such that 

$(1)$ $r^2g_+|_{r=0}=h_0$,

$(2)$ there exists $\epsilon>0$
such that $|\nabla r|_{r^2g_+}\equiv 1$ on $M^n\times [0,\epsilon)$. 
\end{lemma}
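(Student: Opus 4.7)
The plan is to prove existence and uniqueness of the geodesic defining function $r$ by writing $r = e^{w}\rho_0$ for an arbitrary initial defining function $\rho_0$ that induces $h_0$, and reducing the condition $|\nabla r|_{r^2 g_+}\equiv 1$ to a first-order nonlinear PDE for $w$ with $w|_{M^n}=0$, which will turn out to be non-characteristic along $M^n$ and hence admit a unique smooth solution in a collar neighborhood.

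First I would pick any smooth defining function $\rho_0$ satisfying $\rho_0^2 g_+|_{TM^n}=h_0$, obtained from an arbitrary defining function by multiplying by a positive factor extended from the boundary via a collar retraction. Setting $r=e^{w}\rho_0$ and $\bar g_0 := \rho_0^2 g_+$, one computes directly
\begin{equation*}
|dr|_{r^2 g_+}^2 = |d\rho_0|_{\bar g_0}^2 + 2\rho_0\langle d\rho_0, dw\rangle_{\bar g_0} + \rho_0^2 |dw|_{\bar g_0}^2.
\end{equation*}
A standard consequence of the asymptotically hyperbolic condition $\Ric_{g_+}=-n g_+$ is that $|d\rho_0|_{\bar g_0}^2\big|_{M^n}=1$; equivalently, $F:=|d\rho_0|_{\bar g_0}^2-1$ vanishes on $M^n$ and can be written as $F=\rho_0 f$ for a smooth $f$ defined in a neighborhood of $M^n$. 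The equation $|dr|_{r^2 g_+}^2=1$ therefore collapses, after dividing out one factor of $\rho_0$, to
\begin{equation*}
2\langle d\rho_0, dw\rangle_{\bar g_0} + \rho_0\,|dw|_{\bar g_0}^2 = -f.
\end{equation*}

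Next I would solve this equation with the initial data $w|_{M^n}=0$. In collar coordinates $(\rho_0,y)\in[0,\epsilon)\times M^n$, the principal part of the linearization reads $2|d\rho_0|_{\bar g_0}^2\,\partial_{\rho_0}w$, and since $|d\rho_0|_{\bar g_0}^2\neq 0$ on $M^n$ the PDE is non-characteristic along $M^n$. Rewriting it in the form $\partial_{\rho_0}w = H(\rho_0,y,w,\nabla_y w)$ for a smooth $H$, the standard local existence theorem for non-characteristic first-order PDEs, or equivalently direct integration along the integral curves of $(d\rho_0)^{\sharp}$, produces a unique smooth $w$ on $M^n\times[0,\epsilon)$ for some $\epsilon>0$. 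Uniqueness of $r$ then follows because any other defining function satisfying $(1)$ and $(2)$ admits the same ansatz and must solve the same Cauchy problem.

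The main obstacle in this argument is verifying the asymptotic identity $|d\rho_0|_{\bar g_0}^2\big|_{M^n}=1$, since it is exactly this fact which lets us extract the factor of $\rho_0$ and convert the problem into a non-characteristic first-order PDE; without it the equation would be degenerate along $M^n$ and local solvability would fail. I would establish this step either by a direct curvature computation using $\Ric_{g_+}=-n g_+$ to analyze the radial structure of $g_+$ near the boundary, or by quoting the corresponding normal form description of asymptotically hyperbolic metrics from Graham or Lee.
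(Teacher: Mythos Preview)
The paper does not supply its own proof of this lemma; it merely quotes it as a standard fact with references to Graham and Lee. Your argument is precisely the classical one given in those references: write $r=e^{w}\rho_0$, reduce $|dr|_{r^2g_+}^2=1$ to a first-order PDE for $w$ with Cauchy data $w|_{M^n}=0$, and solve by the non-characteristic initial value problem. So your proposal is correct and matches the source the paper cites.

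One minor point worth tightening: the PDE you obtain,
\[
2\langle d\rho_0, dw\rangle_{\bar g_0} + \rho_0\,|dw|_{\bar g_0}^2 = -f,
\]
is quadratic in $\partial_{\rho_0}w$, not linear, so it cannot literally be rewritten as $\partial_{\rho_0}w = H(\rho_0,y,w,\nabla_y w)$ by inspection. The correct statement is that the Hamiltonian $H(x,p)=2\langle d\rho_0,p\rangle_{\bar g_0}+\rho_0|p|_{\bar g_0}^2+f$ satisfies $\partial H/\partial p_0 = 2g^{00}+2\rho_0\,g^{0\mu}p_\mu$, which is nonzero at $\rho_0=0$ (since $g^{00}|_{M^n}=|d\rho_0|_{\bar g_0}^2|_{M^n}=1$), so the boundary is non-characteristic for the full nonlinear method of characteristics. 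This is exactly where your ``main obstacle'' enters, and you have identified it correctly; just be careful not to linearize prematurely.
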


Now we are in a position to define fractional GJMS operators (see \cite{CG} for more details). According to the above notations,   let $s\in\dC\setminus\{\frac{n}{2}\}$, we will consider the following Poisson equation
\begin{equation}
(\triangle_{g_+}+s(n-s))u=0
\end{equation}
and consider its generalized eigenfunction \begin{equation}
 u=Fy^{n-s}+Gy^s,\ F,G\in C^{\infty}(X^{n+1}),\label{solution-to-poisson}
 \end{equation}
 where $y$ is the geodesic defining function for a given boundary $(M^n,h_0)$
with $h_0\in[h]$ (see lemma \ref{l:geodesic-def-funct}). The scattering operator $S(s)$ is a Dirichlet-to-Neumann operator which is given by  the following: if $f\equiv F|_{y=0}$, then
\begin{equation}
S(s)f\equiv G|_{y=0}.\end{equation}
In \cite{GZ}, it was proved that $S(s)$
is a meromorphic family conformally covariant operators for $s$ with simple poles in $\frac{n}{2}+\dZ_+$.
Now given $\gamma\in(0,\frac{n}{2})$, the fractional GJMS operator $P_{2\gamma}$ is defined by
\begin{equation}
P_{2\gamma}f\equiv P_{2\gamma}[h_0,g_+]f\equiv 2^{2\gamma}\frac{\Gamma(\gamma)}{\Gamma(-\gamma)}S(\frac{n}{2}+\gamma)f.\label{regular-fractional-operator}
\end{equation}
Notice that the function $\Gamma(\gamma)$
cancels the simple poles of the scattering operator $S(\frac{n}{2}+\gamma)$, and thus the fractional GJMS operator $P_{2\gamma}$
is continuous when $\gamma\in(0,\frac{n}{2})$.
Moreover, if $\gamma\in(0,n/2)$, the operator $P_{2\gamma}$
satisfies the following conformal covariance property: let $\hat{h}=v^{\frac{4}{n-2\gamma}}h$ and let $\widehat{P}_{2\gamma}$ be the fractional GJMS operator with respect to $\hat{h}$, then 
\begin{equation}
\widehat{P}_{2\gamma}(u)=v^{-\frac{n+2\gamma}{n-2\gamma}}P_{2\gamma}(uv).
\end{equation}
for any $u\in C^{\infty}(M^n)$.
With the fractional GJMS operator $P_{2\gamma}$, we will define fractional $Q$ curvature. 
We start with a simpler case $0<\gamma<\frac{n}{2}$. Let $P_{2\gamma}$ be in (\ref{regular-fractional-operator}), then the corresponding fractional $Q$ curvature is defined by
\begin{equation}
Q_{2\gamma}\equiv(\frac{n-2\gamma}{2})^{-1}P_{2\gamma}(1).\label{regular-fractional-Q}
\end{equation}

In Section \ref{s:critical-case}, we will discuss the critical case where $n=3$ and $\gamma=\frac{3}{2}$.
Actually, in general,
we can modify (\ref{regular-fractional-operator})  and (\ref{regular-fractional-Q}) to define the operator $P_{2\gamma}$
and $Q_{2\gamma}$ curvature in the critical case $\gamma=\frac{n}{2}$. First, if $n$ is even and $\gamma=\frac{n}{2}\equiv k\in\dZ_+$, then $s=n$ is a simple of the scattering operator $S(s)$, while the function $\Gamma(-\gamma)$ has a simple pole at $\gamma=\frac{n}{2}$, so we define
\begin{equation}
P_{2k}\equiv c_k\cdot\Res_{s=n}S(s),
\end{equation}
where $c_k\equiv  (-1)^{k+1} 2^{2k}k!(k-1)!$.
Correspondingly, the curvature
$Q_n$ is given by \begin{equation}Q_n= c_k\cdot S(n)1.\end{equation}
Notice that, if $n$ is even, $S(s)1$ is holomorphic when $s=n$.
Now assume that $n$ is odd, then the operator $S(\frac{n}{2}+\gamma)$ is continuous at $\gamma=\frac{n}{2}$ and $\Gamma(-\gamma)$ is also continuous at $\gamma=\frac{n}{2}$. In this case, the definition of $P_{2\gamma}$
is identical to that in the case $\gamma<\frac{n}{2}$.
By \cite{GZ}, if $n$ is odd, $\lim\limits_{s\to n}S(s)1=0$, then the limit 
$\lim\limits_{\gamma\to n/2}(\frac{n-2\gamma}{2})^{-1}P_{2\gamma}(1)
$ exists and so we define
\begin{equation}
Q_{n}\equiv \lim\limits_{\gamma\to n/2}(\frac{n-2\gamma}{2})^{-1}P_{2\gamma}(1).
\end{equation}
By the computations in \cite{GZ}, when $\gamma=\frac{n}{2}$, the following conformal covariance property always holds ($n$ is either even or odd): if $\hat{g}=e^{2w}g$ then,
\begin{equation}
e^{nw}\widehat{Q}_n=Q_n+P_n(w).
\end{equation}
In the case of $n=3$, the curvature $Q_3$ is intimately connected to the topology of the underlying manifold.
Indeed, it is a combined result due to \cite{FG} and \cite{CQY2} that if $(X^4,g_+)$ is a conformally compact Einstein manifold with a conformal infinity $(M^3,[g])$, 
then
\begin{equation}
8\pi^2\chi(X^4)=\int_{X^4}|W|^2\dvol_{g_+}+2\int_{M^3}Q_3(g,g_+)\dvol_g.\label{chern-gauss-bonnet}
\end{equation}
The above identity can be viewed as Chern-Gauss-Bonnet 
formula in the context of conformally compact Einstein manifolds. This formula will be applied in the proof of geometric rigidity theorems. 

We end this section by giving a remark about the definition of the fractional GJMS operator. Given a conformally compact Einstein manifold $(X^{n+1}, g_+, M^n)$ with a conformal infinity $(M^n,h_0)$, then
by definition, the operator $P_{2\gamma}\equiv P_{2\gamma}[h_0,g_+]$ depends not only on the boundary geometry $(M^n,h_0)$ but also on 
the global geometry $(X^{n+1},g_+)$. In other words, given $(M^n,h)$, the definition of the operator $P_{2\gamma}$ on $M^n$ depends on the choice of the space 
$(X^{n+1},g_+)$.
In our paper, we focus on the compact manifolds $(M^n,g)$ which are locally conformally flat with $R_g>0$. Notice that, in this setting, $(M^n,g)$ is not covered by a torus and thus it is a Kleinian manifold with $\Gamma\cong\pi_1(M^n)\leq\Conf(\mathbb{S}^n)$ convex co-compact. Furthermore, if $\Gamma$ is torsion-free, then $(M^n,[g])$ can be viewed as the conformal infinity of the complete hyperbolic manifold $\mathbb{H}^{n+1}/\Gamma$ (otherwise, $\mathbb{H}^{n+1}/\Gamma$ is a hyperbolic orbifold with finite singularities).
Throughout this paper, we will assume that $\pi_1(M^n)$ is {\it torsion-free} which is necessary for the background of scattering theory from the above discussion, and without explicitly stated the fractional GJMS operator $P_{2\gamma}$ in this paper is always defined by the hyperbolic filling-in in the above context.

\section{Fractional Laplacians on Euclidean Space}

We reviewed in Section \ref{ss:fractiona-operator} the definition of GJMS operators on a conformally compact Einstein manifold. 
In this section, as a special case, we will focus on those operators on Euclidean space with hyperbolic filling-in which plays a crucial role in the proof of the main theorems of this paper.

\subsection{Harmonic Extension and Equivalent Definitions of Fractional Laplacians}

Let $\alpha\in(0,1)$, first we introduce the following space
\begin{equation}
L_{2\alpha}(\dR^n)\equiv\Big\{u:\dR^n\to\dR^1\Big|\int_{\dR^n}\frac{|u(x)|}{1+|x|^{n+2\alpha}}dx<\infty\Big\}
\end{equation}
which is a natural space in the definition of fractional Laplacian.
Now we briefly review 
Caffarelli-Silvestre's result about harmonic extensions in \cite{CS}.  
For fixed $\alpha\in(0,1)$, denote by $b\equiv 1-2\alpha\in(-1,1)$, the for every $f\in C^{\infty}(\dR^n)\cap L^{\infty}(\dR^n)$, there exists 
a unique function $U(x,y)\in L^{\infty}(\dR_+^{n+1})$
such that 
\begin{equation}
\begin{cases}
\Div(y^{b}\nabla U)(x,y)\equiv 0.\\
U(x,0)=f(x).
\end{cases}
\end{equation}
Moreover, $U(x,y)$
can be expressed as the following Poisson integral
\begin{equation}
U(x,y)\equiv C_{n,\alpha}\int_{\dR^n}\frac{y^{1-b}\cdot f(\xi)}{(|x-\xi|^2+y^2)^{\frac{n+1-b}{2}}}d\xi.
\end{equation}
In fact, the above constant $C_{n,\alpha}>0$ is chosen such that
\begin{equation}
C_{n,\alpha}\int_{\dR^n}\frac{y^{1-b}}{(|x-\xi|^2+y^2)^{\frac{n+1-b}{2}}}d\xi=1.\label{unit-integral}
\end{equation}
 
With the above harmonic extension, 
Caffarelli-Silvestre proved that the definition of fractional Laplacian given by harmonic extension (Definition \ref{d:def-by-extension}) is actually equivalent to the one defined in terms of Fourier transform (Definition \ref{d:def-by-fourier}) and equivalent to the one defined in terms of singular integral (Definition \ref{d:def-by-sing-int}) as well.

\begin{definition}\label{d:def-by-extension} Let $d_{n,\alpha}>0$ be some positive constant depending only on $n$ and $\alpha$, then we define 
\begin{equation}
(-\triangle)^{\alpha}f(x)\equiv d_{n,\alpha}\cdot\lim\limits_{y\to0+}-y^b\frac{\partial U}{\partial y}.
\end{equation}
\end{definition}

\begin{definition}\label{d:def-by-fourier} Let $f\in L_{2\alpha}(\dR^n)$ which is viewed as a tempered distribution, then the fractional Laplacian $(-\triangle)^{\alpha}$ is given by the following equation,
\begin{equation}
\mathcal{F}((-\triangle)^{\alpha}f)\equiv|\xi|^{2\alpha}\mathcal{F}(f).
\end{equation}

\end{definition}

\begin{definition}\label{d:def-by-sing-int}
Let $d_{n,\alpha}>0$ be some positive constant, then we define 
\begin{equation} 
(-\triangle)^{\alpha}f(x)\equiv d_{n,\alpha}\cdot \PV \int_{\dR^n}\frac{f(x)-f(y)}{|x-y|^{n+2\alpha}}dy,
\end{equation}
where $f\in L_{2\alpha}(\dR^n)$.
\end{definition}
In fact, the above definitions are equivalent to the following definition in terms of second order difference.
\begin{lemma}\label{d:difference}Let $0<\alpha<1$. If $f\in L_{2\alpha}(\dR^n)$, then \begin{equation}
(-\triangle)^{\alpha}f(x)=-d_{n,\alpha}\cdot\int_{\dR^n}\frac{f(x+y)+f(x-y)-2f(x)}{|y|^{n+2\alpha}}dy,
\end{equation}
for some positive constant $d_{n,\alpha}$.
\end{lemma}

The following lemma is straightforward.
\begin{lemma}\label{l:comparison}Given $f\in C^{\infty}(
\dR^n)\cap L^{\infty}(\dR^n)$, let $U_1(x,y)$ be the harmonic extension of $f^{\tau_1}$ for some $\tau_1> 0$ and let 
$U_2(x,y)$ be the harmonic extension of $f^{\tau_2}$ for some $\tau_2>\tau_1>0$. Then for every $(x,y)\in\dR_+^{n+1}$,   we have that 
\begin{equation}
(U_{1})^{\tau_2/\tau_1}(x,y)\leq U_2(x,y).
\end{equation}

\end{lemma}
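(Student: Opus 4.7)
The plan is to obtain the inequality pointwise by combining the explicit Poisson representation of the harmonic extension with Jensen's inequality, using that the kernel in (\ref{unit-integral}) is normalized to have total mass one, hence is a probability density in $\xi$ for each fixed $(x,y)\in\dR_+^{n+1}$.

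First, I would observe that the hypothesis tacitly requires $f\geq 0$ so that the fractional powers $f^{\tau_1}$ and $f^{\tau_2}$ are well defined bounded boundary data, and I will work with such a nonnegative $f$. Set
\[
P(x,y,\xi) \equiv C_{n,\alpha}\,\frac{y^{1-b}}{(|x-\xi|^2+y^2)^{\frac{n+1-b}{2}}},
\]
so that $U_i(x,y) = \int_{\dR^n} P(x,y,\xi)\,f(\xi)^{\tau_i}\,d\xi$ for $i=1,2$. By (\ref{unit-integral}), the measure $P(x,y,\cdot)\,d\xi$ is a probability measure on $\dR^n$ for every fixed $(x,y)$.

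Next, setting $\lambda\equiv\tau_2/\tau_1>1$, the function $\phi(t)=t^{\lambda}$ is convex on $[0,\infty)$. Applying Jensen's inequality with $\phi$ against the probability measure $P(x,y,\cdot)\,d\xi$ and the nonnegative integrand $f^{\tau_1}$ yields
\[
U_1(x,y)^{\tau_2/\tau_1} \leq \int_{\dR^n} P(x,y,\xi)\,\bigl(f(\xi)^{\tau_1}\bigr)^{\tau_2/\tau_1}\,d\xi = \int_{\dR^n} P(x,y,\xi)\,f(\xi)^{\tau_2}\,d\xi = U_2(x,y),
\]
which is the desired bound.

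There is essentially no obstacle: the only point worth emphasizing is the probability normalization of the Poisson kernel, which is precisely what the constant $C_{n,\alpha}$ is chosen to enforce in (\ref{unit-integral}). Uniqueness of the bounded solution to the degenerate equation $\Div(y^{b}\nabla U)=0$ with prescribed boundary data (Caffarelli--Silvestre) then ensures that $U_1$ and $U_2$ are indeed represented by the above Poisson integrals, which closes the argument.
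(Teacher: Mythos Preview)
Your argument is correct and essentially coincides with the paper's: both write $U_1,U_2$ via the Poisson integral, use the normalization (\ref{unit-integral}) so that the kernel is a probability density, and then apply a convexity inequality. The only cosmetic difference is that the paper phrases the convexity step as H\"older's inequality (splitting $P\,f^{\tau_1}=P^{1-\tau_1/\tau_2}(P f^{\tau_2})^{\tau_1/\tau_2}$) to obtain $U_1\le U_2^{\tau_1/\tau_2}$, which is the same inequality you get from Jensen with $\phi(t)=t^{\tau_2/\tau_1}$.
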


\begin{proof}

Equivalently, we will prove that for every $(x,y)\in\dR_+^{n+1}$
\begin{equation}
U_1(x,y)\leq (U_2)^{\tau_1/\tau_2}(x,y).
\end{equation}
By Poisson's formula, we have the following,
\begin{equation}
U_1(x,y)=C_{n,\alpha}\int_{\dR^n }\frac{y^{1-b}\cdot f^{\tau_1}(\xi)}{(|x-\xi|^2+y^2)^{\frac{n+1-b}{2}}}d\xi,
\end{equation}
and
\begin{equation}
U_2(x,y)=C_{n,\alpha}\int_{\dR^n}\frac{ y^{1-b}\cdot f^{\tau_2}(\xi)}{(|x-\xi|^2+y^2)^{\frac{n+1-b}{2}}}d\xi.
\end{equation}
In fact,
\begin{eqnarray}
U_1(x,y)&=&C_{n,\alpha}\int_{\dR^n }\frac{y^{1-b}\cdot f^{\tau_1}(\xi)}{(|x-\xi|^2+y^2)^{\frac{n+1-b}{2}}}d\xi\nonumber\\&=&C_{n,\alpha}\Big(\int_{\dR^n}\Big(\frac{y^{1-b}}{(|x-\xi|^2+y^2)^{\frac{n+1-b}{2}}}\Big)^{1-\frac{\tau_1}{\tau_2}}\cdot\Big(\frac{y^{1-b}f^{\tau_2}(\xi)}{(|x-\xi|^2+y^2)^{\frac{n+1-b}{2}}}\Big)^{\frac{\tau_1}{\tau_2}}d\xi\Big)\nonumber\\
&\overset{\text{H\"older}}{\leq}&C_{n,\alpha}\Big(\int_{\dR^n}\frac{y^{1-b}}{(|x-\xi|^2+y^2)^{\frac{n+1-b}{2}}}d\xi\Big)^{1-\frac{\tau_1}{\tau_2}}\Big(\int_{\dR^n}\frac{y^{1-b}f^{\tau}(\xi)}{(|\xi|^2+y^2)^{\frac{n+1-b}{2}}}d\xi\Big)^{\frac{\tau_1}{\tau_2}}\nonumber\\
&=&\Big(C_{n,\alpha}\int_{\dR^n}\frac{y^{1-b}}{(|\xi|^2+y^2)^{\frac{n+1-b}{2}}}d\xi\Big)^{1-\frac{\tau_1}{\tau_2}}\Big(C_{n,\alpha}\int_{\dR^n}\frac{y^{1-b}f^{\tau}(\xi)}{(|x-\xi|^2+y^2)^{\frac{n+1-b}{2}}}d\xi\Big)^{\frac{\tau_1}{\tau_2}}\nonumber\\
&\overset{(\ref{unit-integral})}{=}&\Big(C_{n,\alpha}\int_{\dR^n}\frac{y^{1-b}f^{\tau}(\xi)}{(|x-\xi|^2+y^2)^{\frac{n+1-b}{2}}}d\xi\Big)^{\frac{\tau_1}{\tau_2}}\nonumber\\
&=&(U_2)^{\tau_1/\tau_2}(x,y).
\end{eqnarray}

\end{proof}

\subsection{Comparison Principle for Fractional Laplacians}

Graham and Zworski 
developed the deep connection between scattering theory 
and conformal geometry in their fundamental work \cite{GZ}.
The following proposition is a crucial observation in the proof of the main theorems. The proof follows from the combination of Lemma \ref{l:comparison} and the expansion (\ref{solution-to-poisson}). 
\begin{proposition}\label{p:harmonic-meets-scattering}Given $0<\alpha<1$ and $\tau>0$. Let $f\in C^{\infty}(\dR^n)\cap L^{\infty}(\dR^n)$ be positive, then for every $x\in\dR^n$, the quantity
\begin{equation}
\mathcal{L}_f(\tau)(x)\equiv\frac{\Big((-\triangle)^{\alpha}f^{\tau}\Big)(x)}{\tau\cdot f^{\tau}}
\end{equation}
is monotone decreasing in $\tau$.
\end{proposition}
\begin{proof}
It suffices to prove that for every $\tau_2>\tau_1>0$ and $x\in\dR^n$,
\begin{equation}
\frac{(-\triangle)^{\alpha}f^{\tau_1}}{\tau_1\cdot f^{\tau_1}}\geq \frac{(-\triangle)^{\alpha}f^{\tau_2}}{\tau_2\cdot f^{\tau_2}}.\end{equation}

Let $U_{1}$ be the harmonic extension of $f^{\tau_1}$
and let $U_2$ be the harmonic extension of $f^{\tau_2}$. On the other hand, let $s\equiv\frac{n+2\alpha}{2}$ and immediately
$2s-n\in(0,2)$.
Now
denote 
\begin{equation}
u_{1}\equiv y^{n-s}\cdot U_{1},\ u_{2}\equiv y^{n-s}\cdot U_{2},
\end{equation}
 and then by the calculations in \cite{CG}, we have that
 \begin{equation}
 \triangle_{\mathbb{H}^{n+1}}u_{1}+s(n-s)u_{1}=0,\  \triangle_{\mathbb{H}^{n+1}}u_{2}+s(n-s)u_{2}=0.
 \end{equation}
Hence, we obtain the Taylor expansion of $U_{1}$ in geodesic defining function 
near to the slice $\{y=0\}$:
\begin{equation}
U_{1}=\Big(f^{\tau_1}+O(y^2)\Big)+\Big((S(s)f^{\tau_1})\cdot y^{2s-n}+O(y^{2s-n+2})\Big),
\end{equation}
and 
\begin{equation}
U_{2}=\Big(f^{\tau_2}+O(y^2)\Big)+\Big((S(s)f^{\tau_2})\cdot y^{2s-n}+O(y^{2s-n+2})\Big).
\end{equation}
Notice that $2s-n\in(0,2)$, so near to the boundary $\{y=0\}$, it holds that 
\begin{eqnarray}
(U_1)^{\tau_2/\tau_1}&=&\Big(f^{\tau_1}+(S(s)f^{\tau_1})\cdot y^{2s-n}+O(y^2)\Big)^{\tau_2/\tau_1}\nonumber\\
&=&f^{\tau_2}\cdot\Big(1+\frac{\tau_2}{\tau_1}\cdot\frac{S(s)f^{\tau_1}}{f^{\tau_1}}\cdot y^{2s-n}+O(y^{2})\Big),\end{eqnarray}
and
\begin{equation}
U_2=f^{\tau_2}\cdot\Big(1+\frac{S(s)f^{\tau_2}}{f^{\tau_2}}\cdot y^{2s-n}+O(y^2)\Big).
\end{equation}

 By Lemma \ref{l:comparison}, it holds that $(U_1)^{\tau_2/\tau_1}\leq U_{2}$, which implies that
 \begin{equation}
 \frac{\tau_2}{\tau_1}\cdot\frac{S(s)f^{\tau_1}}{f^{\tau_1}}\leq \frac{S(s)f^{\tau_2}}{f^{\tau_2}}.
 \end{equation}
By definition, $(-\triangle)^{\alpha}\equiv 2^{2\alpha}\cdot\frac{\Gamma(\alpha)}{\Gamma(-\alpha)}\cdot S(s)$
with $s=\frac{n+2\alpha}{2}$, and thus 
\begin{equation}
\frac{(-\triangle)^{\alpha}f^{\tau_1}}{\tau_1\cdot f^{\tau_1}}\geq \frac{(-\triangle)^{\alpha}f^{\tau_2}}{\tau_2\cdot f^{\tau_2}}.\end{equation}
\end{proof}

\subsection{Smooth Approximation and Convergence Lemmas}

The Hausdorff dimension estimate in Theorem \ref{t:dim-estimate} relies on delicate estimates on the conformal factor $e^w$. A crucial point to see this is to apply  Proposition \ref{p:harmonic-meets-scattering} to the conformal factor $e^w$. 
However, Proposition \ref{p:harmonic-meets-scattering} works only for the globally smooth function in $\dR^n$, while
in our context, the function $e^w$ blows up along the limit set $\Lambda(\Gamma)$ (see Lemma \ref{l:c_0-estimate-on-factor}).
So it is necessary to approximate $e^w$ by a family of converging smooth functions and obtain the comparison for the smooth approximations. The key step in this procedure is to 
prove the comparison inequalities of the smoothing approximations indeed converge to the comparison inequality for $e^w$.
 So the primary goal of this section
is to define the smooth approximations and prove various convergence results.

Let us start with the definition of the standard mollifier.
Given $p\geq1 $, for every $f\in L_{loc}^p(\dR^n)$, define
\begin{equation}f_{\epsilon}(x)\equiv f*\eta_{\epsilon}(x)=\int_{\dR^n}f(x-y)\eta_{\epsilon}(y)dy,\label{def-mollifier}
\end{equation}
where 
\begin{equation}
\eta_{\epsilon}(x)\equiv\frac{1}{\epsilon^n}\eta(\frac{x}{\epsilon})\geq 0,\end{equation}
satisfies the following
\begin{equation}
\eta\in C_0^{\infty}(\dR^n), \ \Supp(\eta)\in B_1(0^n),\ \int_{\dR^n}\eta=1.
\end{equation}
Immediately, by the symmetry property of convolution, it holds that 
\begin{equation}
f_{\epsilon}(x)=\int_{\dR^n}f(x-y)\eta_{\epsilon}(y)dy=\int_{B_{\epsilon}(0^n)}f(x-y)\eta_{\epsilon}(y)dy.
\end{equation}

Now we present some basic properties of the standard mollifiers which will be frequently used in the later proofs of the paper.
\begin{lemma}\label{l:uniform-convergence-of-mollifier}Let $\Omega\subset \dR^n$ be an open subset. 
If $f\in C(\Omega)$, then for every compact subset $K\subset \Omega$, it holds that when $\epsilon\to0$,
\begin{equation}
\|f_{\epsilon}-f\|_{C^0(K)}\rightarrow0.
\end{equation}

\end{lemma}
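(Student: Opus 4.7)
The plan is to exploit uniform continuity of $f$ on a slight thickening of $K$ inside $\Omega$, together with the fact that $\eta_\epsilon$ is a probability density supported in the $\epsilon$-ball. This is the classical proof that a standard mollification converges locally uniformly to a continuous function; no technicalities beyond separating $K$ from $\partial\Omega$ are needed.

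First I would fix a compact $K\subset\Omega$ and choose $\delta>0$ so small that the closed $\delta$-neighborhood
\begin{equation}
K_\delta\equiv\{x\in\dR^n\mid d_0(x,K)\leq\delta\}
\end{equation}
still lies inside $\Omega$; such a $\delta$ exists because $K$ is compact and $\Omega$ is open. Since $f\in C(\Omega)$ and $K_\delta$ is compact, $f$ is uniformly continuous on $K_\delta$, so for every $\eta>0$ there exists $\rho=\rho(\eta)\in(0,\delta)$ with $|f(z)-f(x)|<\eta$ whenever $x,z\in K_\delta$ and $|z-x|<\rho$.

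Next, for any $\epsilon<\rho$ and any $x\in K$, the support condition $\Supp(\eta_\epsilon)\subset B_\epsilon(0^n)$ and the normalization $\int_{\dR^n}\eta_\epsilon=1$ let me write
\begin{equation}
f_\epsilon(x)-f(x)=\int_{B_\epsilon(0^n)}\bigl(f(x-y)-f(x)\bigr)\eta_\epsilon(y)\,dy.
\end{equation}
For $y\in B_\epsilon(0^n)$ we have $x-y\in K_\delta$ and $|(x-y)-x|=|y|<\epsilon<\rho$, so the integrand is bounded in absolute value by $\eta\cdot\eta_\epsilon(y)$. Integrating and using $\eta_\epsilon\geq 0$ with total mass one gives
\begin{equation}
|f_\epsilon(x)-f(x)|\leq\eta\int_{B_\epsilon(0^n)}\eta_\epsilon(y)\,dy=\eta.
\end{equation}
Taking the supremum over $x\in K$ and letting $\eta\to0$ yields $\|f_\epsilon-f\|_{C^0(K)}\to 0$.

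There is no real obstacle here; the only thing to be careful about is making sure that $x-y$ lies inside $\Omega$ (so that $f(x-y)$ even makes sense) and inside a fixed compact set on which $f$ is uniformly continuous. Both are handled by the single choice of the buffer $\delta$ above, and everything else is just unfolding the definition of $f_\epsilon$ in \eqref{def-mollifier}.
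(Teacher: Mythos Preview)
Your proof is correct and is exactly the standard argument the paper has in mind; in fact the paper omits the proof entirely, stating ``The proof of Lemma \ref{l:uniform-convergence-of-mollifier} is straightforward and standard, so we omit the proof.'' One purely notational remark: you use $\eta$ both for the mollifier kernel $\eta_\epsilon$ and for the uniform-continuity tolerance, which could confuse a reader --- renaming the latter (e.g.\ to $\sigma$) would avoid this.
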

The proof of Lemma \ref{l:uniform-convergence-of-mollifier} is straightforward and standard, so we omit the proof.

Next we introduce the convergence property of derivatives of mollifiers.
\begin{lemma}\label{l:derivative-converge}
Let $f\in C^2(B_{R}(p))$, then for every $x\in B_R(p)$, it holds that:

$(1)$ for every $1\leq j\leq n$,
\begin{equation}
\lim\limits_{\epsilon\to0}\frac{\partial f_{\epsilon}}{\partial x_j}(x)= \frac{\partial f}{\partial x_j }(x),
\end{equation}

$(2)$ for every $1\leq i\leq j\leq n$,
\begin{equation}
\lim\limits_{\epsilon\to0}\frac{\partial^2 f_{\epsilon}}{\partial x_j\partial x_i}(x)= \frac{\partial^2 f}{\partial x_j\partial x_i}(x).
\end{equation}

\end{lemma}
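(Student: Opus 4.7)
The plan is to reduce both claims to Lemma \ref{l:uniform-convergence-of-mollifier} applied to the (continuous) partial derivatives of $f$. The key fact is the standard \emph{commutation of convolution and differentiation}: since $\eta_\epsilon$ is smooth and compactly supported in $B_\epsilon(0^n)$, differentiation under the integral sign in \eqref{def-mollifier} yields $\partial_j f_\epsilon = (\partial_j f)_\epsilon$ and $\partial^2_{ji} f_\epsilon = (\partial^2_{ji} f)_\epsilon$ whenever both sides make sense, after which continuity of the derivatives of $f$ and the preceding lemma finish the argument.

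First I would fix $x \in B_R(p)$ and choose $\epsilon_0 \equiv \tfrac{1}{2}(R - |x - p|) > 0$, so that $B_{\epsilon}(x) \subset B_R(p)$ for every $0 < \epsilon < \epsilon_0$. On this range of $\epsilon$ we have
\begin{equation}
f_\epsilon(x) = \int_{B_\epsilon(0^n)} f(x - y)\,\eta_\epsilon(y)\, dy,
\end{equation}
and the integrand depends only on values of $f$ inside the compact set $\overline{B_{\epsilon_0}(x)}\subset B_R(p)$. Since $f \in C^2(B_R(p))$, the map $x \mapsto f(x-y)\eta_\epsilon(y)$ is $C^2$ in $x$ and uniformly dominated in $y$ for $x$ ranging over a compact neighborhood of our fixed point. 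Differentiating under the integral sign and using $\partial_{x_j} f(x-y) = (\partial_j f)(x-y)$ gives $\partial_j f_\epsilon(x) = (\partial_j f)_\epsilon(x)$, and iterating the same argument produces $\partial^2_{ji} f_\epsilon(x) = (\partial^2_{ji} f)_\epsilon(x)$, valid for all $\epsilon < \epsilon_0$.

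Since $f \in C^2(B_R(p))$, the functions $\partial_j f$ and $\partial^2_{ji} f$ are continuous on $B_R(p)$, hence continuous on the compact neighborhood $\overline{B_{\epsilon_0/2}(x)} \subset B_R(p)$. Applying Lemma \ref{l:uniform-convergence-of-mollifier} to each of these continuous functions on this compact neighborhood yields $(\partial_j f)_\epsilon(x) \to (\partial_j f)(x)$ and $(\partial^2_{ji} f)_\epsilon(x) \to (\partial^2_{ji} f)(x)$ as $\epsilon \to 0$, which combined with the commutation identities of the previous paragraph establishes both (1) and (2).

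There is no substantive obstacle here; the only minor point of care is that $f$ is a priori defined only on $B_R(p)$ and not on all of $\dR^n$, so one must shrink $\epsilon$ so that $B_\epsilon(x)$ stays inside the domain of regularity before invoking differentiation under the integral sign and the uniform-convergence lemma. This is the standard argument that mollification commutes with derivatives on the interior of the domain where $f$ is smooth enough.
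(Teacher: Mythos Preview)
Your proof is correct and follows essentially the same route as the paper: both arguments establish the commutation identity $\partial_j f_\epsilon = (\partial_j f)_\epsilon$ and then pass to the limit using continuity of $\partial_j f$. The paper justifies the commutation from first principles via difference quotients, the mean value theorem, and dominated convergence, whereas you invoke differentiation under the integral sign directly; after that, both conclude by letting $\epsilon\to 0$ (you cite Lemma~\ref{l:uniform-convergence-of-mollifier} explicitly, the paper leaves this implicit).
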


\begin{proof}
The proof of the lemma is standard. So we just give a quick outline of the proof of $(1)$. Let $e_j\equiv\frac{\partial}{\partial x_j}$, then by mean value theorem,
\begin{eqnarray}
\frac{f_{\epsilon}(x+he_j)-f_{\epsilon}(x)}{h}&=&\int_{B_{\epsilon}(0^n)}\frac{f(x+he_j-y)-f(x-y)}{h}\cdot\eta_{\epsilon}(y)dy\nonumber\\
&=&\int_{B_{\epsilon}(0^n)}\frac{\partial f}{\partial x_j}(x-y+\xi(h)e_j) \cdot \eta_{\epsilon}(y)dy,
\end{eqnarray}
 where $0\leq \xi(h)\leq h$. Applying dominated 
convergence theorem,
\begin{eqnarray}
\frac{\partial f_{\epsilon}}{\partial x_j}(x)&=&\lim\limits_{h\to0}\frac{f_{\epsilon}(x+he_j)-f_{\epsilon}(x)}{h}\nonumber\\
&=&\int_{B_{\epsilon}(0^n)}\frac{\partial f}{\partial x_j}(x-y) \cdot \eta_{\epsilon}(y)dy.
\end{eqnarray}
So it follows that 
\begin{equation}
\lim\limits_{\epsilon\to0}\frac{\partial f_{\epsilon}}{\partial x_j}(x)=\frac{\partial f}{\partial x_j}(x).
\end{equation}

\end{proof}

The following convergence property of mollifiers is useful in the proof of convergence properties of fractional Laplacians.
\begin{lemma}
\label{l:integral-converge-mollifiers}Given $\alpha\in(0,1)$ and assume that $f\in L_{2\alpha}(\dR^n)$, then the following property holds: let $\delta>0$, then for sufficiently small $\epsilon>0$, 
\begin{equation}
\int_{\dR^n\setminus B_{2\delta}(x_0)}\frac{|f_{\epsilon}|(y)}{|x_0-y|^{n+2\alpha}}dy\leq  
C_0(n)\int_{\dR^n\setminus B_{\delta}(x_0)}\frac{|f|(y)}{|x_0-y|^{n+2\alpha}}dy.\label{mollifier-int-bound}
 \end{equation}

\end{lemma}

\begin{proof}
Applying the definition of mollifiers and Fubini's theorem,
\begin{eqnarray}
\int_{\dR^n\setminus B_{2\delta}(x_0)}\frac{|f_{\epsilon}(y)|}{|x_0-y|^{n+2\alpha}}dy&=&\int_{\dR^n\setminus B_{2\delta}(x_0)}\Big|\int_{\dR^n}f(y-z)\eta_{\epsilon}(z)dz\Big|\cdot\frac{1}{|x_0-y|^{n+2\alpha}}dy\nonumber\\
&\leq &\int_{\dR^n\setminus B_{2\delta}(x_0)}\Big(\int_{B_{\epsilon}(0^n)}|f(y-z)|\eta_{\epsilon}(z)dz\Big)\cdot\frac{1}{|x_0-y|^{n+2\alpha}}dy\nonumber\\
&= &\int_{B_{\epsilon}(0^n)}\Big(\int_{\dR^n\setminus B_{2\delta}(x_0)}\frac{|f(y-z)|}{|x_0-y|^{n+2\alpha}}dy\Big)\eta_{\epsilon}(z)dz.\label{iterated-int}\end{eqnarray}
Since  $y\in \Omega\equiv\dR^n\setminus B_{2\delta}(x_0)$, $z\in B_{\epsilon}(0^n)$ and $\epsilon<<\delta$, so triangle inequality implies that
$\xi\equiv y-z\in\dR^n\setminus B_{\delta}(x_0)$ and thus
\begin{equation}
|x_0-y|=|x_0-\xi-z|\geq |x_0-\xi|-|z|\geq\frac{1}{2}|x_0-\xi|.
\end{equation}
Therefore,
\begin{equation}
\int_{\dR^n\setminus B_{2\delta}(x_0)}\frac{|f(y-z)|dy}{|x_0-y|^{n+2\alpha}}\leq C_0(n)\int_{\dR^n\setminus B_{\delta}(x_0)}\frac{|f(\xi)|d\xi}{|x_0-\xi|^{n+2\alpha}}.
\end{equation}
Plugging the above inequality into (\ref{iterated-int}), 
\begin{eqnarray}
\int_{\dR^n\setminus B_{2\delta}(x_0)}\frac{|f_{\epsilon}|(y)}{|x_0-y|^{n+2\alpha}}dy&\leq & C_0(n)\int_{B_{\epsilon}(0^n)}\Big(\int_{\dR^n\setminus B_{\delta}(x_0)}\frac{|f(\xi)|}{|x_0-\xi|^{n+2\alpha}}d\xi\Big)\eta_{\epsilon}(z)dz\nonumber\\
&= &C_0(n)\int_{\dR^n\setminus B_{\delta}(x_0)}\frac{|f(\xi)|}{|x_0-\xi|^{n+2\alpha}}d\xi.
\end{eqnarray}
So the proof is done.

\end{proof}

With the above basic properties of mollifiers, now we focus 
on the convergence properties of fractional Laplacian $(-\triangle)^{\alpha}$ acting on the standard mollifiers $f_{\epsilon}$.
In the following proofs, mainly we will choose the definition of $(-\triangle)^{\alpha}$ in terms of second order difference (see Definition \ref{d:difference}).

Let $\Lambda\subset\dR^n$ be  a closed  subset of zero measure. 
From now on, we will study the  functions which satisfy the following assumption:

\begin{assumption} \label{a:functional-assumption} $f\in L_{2\alpha}(\dR^n)\cap C^{\infty}(\dR^n\setminus\Lambda)$ such that there exists $K>0$, $q>0$ such that for every $y\in\dR^n\setminus\Lambda$,
\begin{equation}K^{-1}\cdot d_0^{-q}(y,\Lambda)\leq f(y)\leq K\cdot d_0^{-q}(y,\Lambda),\end{equation}
 where $d_0$
is the Euclidean distance on $\dR^n$.
\end{assumption}

\begin{lemma}\label{l:basic-convergence-lemma}Fix $0<\alpha<1$ and $\Lambda\subset\dR^n$ be a closed subset of zero measure. Let $f\in L_{2\alpha}(\dR^n)\cap C^{\infty}(\dR^n\setminus\Lambda)$ and denote by $f_{\epsilon}$  the standard mollifier of 
$f$ in the sense of (\ref{def-mollifier}).
 Then \begin{equation}
\lim\limits_{\epsilon\to0}((-\triangle)^{\alpha}f_{\epsilon})(x)=((-\triangle)^{\alpha}f)(x).
\end{equation}
for every $x\in\dR^n\setminus\Lambda$.\end{lemma}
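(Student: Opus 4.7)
The plan is to use the second-order difference representation from Lemma \ref{d:difference}, which is convenient because its integrand is absolutely integrable even near the singular point $y=0$ (because $\alpha<1$ and the numerator is $O(|y|^2)$ for smooth test points). The key observation is that although $f$ blows up along $\Lambda$, the evaluation point $x\in\dR^n\setminus\Lambda$ is at a positive distance $r_0\equiv d_0(x,\Lambda)>0$ from $\Lambda$, so near $x$ the function $f$ is actually smooth and only the contribution away from $x$ interacts with the singular behavior.

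First I would fix $\delta\in(0,r_0/8)$ and split
\[
((-\triangle)^{\alpha}f_{\epsilon})(x)=-d_{n,\alpha}\bigl(I_1^\epsilon(x)+I_2^\epsilon(x)\bigr),
\]
where $I_1^\epsilon$ integrates the second-order difference over $B_\delta(0^n)$ and $I_2^\epsilon$ over $\dR^n\setminus B_\delta(0^n)$. For the near-diagonal piece $I_1^\epsilon$, I will use that on the compact set $\overline{B_{2\delta}(x)}\subset\dR^n\setminus\Lambda$ the function $f$ is $C^\infty$. Combining the identity $\partial^{\beta}(f_{\epsilon})=(\partial^{\beta}f)_{\epsilon}$ for any multi-index $\beta$ with Lemma \ref{l:uniform-convergence-of-mollifier} applied to the continuous function $\partial^\beta f$ on $\overline{B_{2\delta}(x)}$, one gets uniform $C^2$-convergence $f_{\epsilon}\to f$ on $\overline{B_{2\delta}(x)}$ for $\epsilon<\delta$. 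A second-order Taylor expansion then produces the uniform pointwise bound
\[
\bigl|f_{\epsilon}(x+y)+f_{\epsilon}(x-y)-2f_{\epsilon}(x)\bigr|\leq M|y|^2,\qquad |y|<\delta,
\]
with $M$ independent of $\epsilon$; since $|y|^2\cdot |y|^{-n-2\alpha}$ is integrable on $B_\delta(0^n)$ (because $\alpha<1$), dominated convergence gives $I_1^\epsilon(x)\to I_1^0(x)$.

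For the far piece $I_2^\epsilon$, the constant term $2f_{\epsilon}(x)\to 2f(x)$ since $f$ is continuous at $x$. For the translated terms, after the change of variable $z=x+y$ (and similarly for $x-y$), the task reduces to showing
\[
\int_{|z-x|\geq\delta}\frac{f_{\epsilon}(z)-f(z)}{|x-z|^{n+2\alpha}}\,dz\longrightarrow 0.
\]
A rescaling reduces this to exactly the situation of Lemma \ref{l:integral-converge-mollifiers}(2) with base point $x$ and radius $\delta$ in place of $1$. To apply that lemma I need $f\in L^1\bigl(\dR^n\setminus B_\delta(x),\,d\mu_x\bigr)$ with $d\mu_x(z)=dz/|x-z|^{n+2\alpha}$: this splits into the region $|z|\leq R$ (where $f\in L^1_{\text{loc}}$ and $|x-z|^{-n-2\alpha}\leq\delta^{-n-2\alpha}$) and the tail $|z|>R$ (where $|x-z|^{-n-2\alpha}\lesssim(1+|z|)^{-n-2\alpha}$, making the integral finite by the hypothesis $f\in L_{2\alpha}(\dR^n)$). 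Hence $I_2^\epsilon(x)\to I_2^0(x)$.

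The main obstacle I anticipate is keeping the near-diagonal bound uniform in $\epsilon$: one must check that the second-order difference of $f_\epsilon$ on $B_\delta(0^n)$ really does have its $O(|y|^2)$ constant controlled independently of $\epsilon$, which is exactly what the uniform $C^2$-convergence on the fixed compact set $\overline{B_{2\delta}(x)}\subset\dR^n\setminus\Lambda$ delivers. Once these two convergences are combined, summing $I_1^\epsilon\to I_1^0$ and $I_2^\epsilon\to I_2^0$ and using Lemma \ref{d:difference} yields $((-\triangle)^{\alpha}f_{\epsilon})(x)\to((-\triangle)^{\alpha}f)(x)$, completing the proof.
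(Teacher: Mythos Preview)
Your proposal is correct and follows essentially the same route as the paper: both use the second-order difference representation, split into a near-diagonal piece $|y|<\delta$ handled by uniform $C^2$-control of $f_\epsilon$ on a compact subset of $\dR^n\setminus\Lambda$ and integrability of $|y|^{2-n-2\alpha}$, and a far piece $|y|\geq\delta$ handled by Lemma~\ref{l:integral-converge-mollifiers}. Your explicit verification that $f\in L^1(\dR^n\setminus B_\delta(x),d\mu_x)$ from the $L_{2\alpha}$ hypothesis is a detail the paper leaves implicit, so your write-up is in fact slightly more complete on that point.
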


\begin{proof}

First, by Definition \ref{d:difference},
\begin{equation}
(-\triangle)^{\alpha}f_{\epsilon}(x)=-C_{n,\alpha}\int_{\dR^n}\frac{f_{\epsilon}(x+y)+f_{\epsilon}(x-y)-2f_{\epsilon}(x)}{|y|^{n+2\alpha}}dy.
\end{equation}
Let \begin{equation}H_{\epsilon}(x,y)\equiv \frac{f_{\epsilon}(x+y)+f_{\epsilon}(x-y)-2f_{\epsilon}(x)}{|y|^{n+2\alpha}},\end{equation}
and
\begin{equation}
H(x,y)\equiv \frac{f(x+y)+f(x-y)-2f(x)}{|y|^{n+2\alpha}}.
\end{equation}
Then for fixed $0<\delta_0(x)<\frac{1}{10^3}\cdot d_0(x,\Lambda)$,
\begin{eqnarray}
(-\triangle)^{\alpha}f_{\epsilon}(x)&=&-C_{n,\alpha}\int_{B_{\delta_0}(0^n)}H_{\epsilon}(x,y)dy-C_{n,\alpha}\int_{\dR^n\setminus B_{\delta_0}(0^n)}H_{\epsilon}(x,y)dy\nonumber\\
&\equiv &I_{\epsilon}^{(1)}(x)+I_{\epsilon}^{(2)}(x).\end{eqnarray}
Correspondingly,
\begin{equation}
I^{(1)}(x)\equiv -C_{n,\alpha}\int_{B_{\delta_0}(0^n)}\frac{f(x+y)+f(x-y)-2f(x)}{|y|^{n+2\alpha}}dy,
\end{equation}
and
\begin{equation}
I^{(2)}(x)\equiv -C_{n,\alpha}\int_{\dR^n\setminus B_{\delta_0}(0^n)}\frac{f(x+y)+f(x-y)-2f(x)}{|y|^{n+2\alpha}}dy.
\end{equation}
So we will prove that for every $x\in\dR^n\setminus\Lambda$, 
\begin{equation}
\lim\limits_{\epsilon\to0}I_{\epsilon}^{(1)}(x)=I^{(1)}(x),\label{I1-converge}
\end{equation}
and
\begin{equation}
\lim\limits_{\epsilon\to0}I_{\epsilon}^{(2)}(x)=I^{(2)}(x).\label{I2-converge}
\end{equation}

The first stage is to prove (\ref{I1-converge}). The proof  follows from  the derivatives convergence property in Lemma \ref{l:derivative-converge}.
In fact,
\begin{eqnarray}
|I_{\epsilon}^{(1)}(x)-I^{(1)}(x)|=-C_{n,\alpha}
\int_{B_{\delta_0}(0^n)}(H_{\epsilon}(x,y)-H(x,y))dy,
\end{eqnarray}
and
\begin{eqnarray}
|H_{\epsilon}(x,y)-H(x,y)|&=&\frac{|(f_{\epsilon}-f)(x+y)+(f_{\epsilon}-f)(x-y)-2(f_{\epsilon}-f)(x)|}{|y|^{n+2\alpha}}\nonumber\\
&\leq& C(n)\frac{\|\nabla^2(f_{\epsilon}-f)\|_{L^{\infty}(B_{4\delta_0}(x))}}{|y|^{n+2\alpha-2}}\nonumber\\
&\leq &C(n)\frac{\|\nabla^2 f\|_{L^{\infty}(B_{4\delta_0}(x))}}{|y|^{n+2\alpha-2}}.
\end{eqnarray}
Notice that the function $|y|^{n+2\alpha-2}\in L^1(B_{\delta_0}(0^n))$. Hence, applying Lemma \ref{l:uniform-convergence-of-mollifier} and 
dominated convergence theorem,
the convergence 
\begin{equation}
\lim\limits_{\epsilon\to0}I_{\epsilon}^{(1)}(x)=I^{(1)}(x)
\end{equation}
holds for every $x\in\dR^n\setminus\Lambda$.

Next we switch to prove (\ref{I2-converge}). The proof basically follows from Lemma \ref{l:integral-converge-mollifiers}. In fact,
\begin{eqnarray}
|I_{\epsilon}^{(2)}(x)-I^{(2)}(x)|&=&C_{n,\alpha}\int_{\dR^n\setminus B_{\delta_0}(0^n)}|H_{\epsilon}(x,y)-H(x,y)|dy\nonumber\\
&\leq &C_{n,\alpha}\Big(\int_{\dR^n\setminus B_{\delta_0}(0^n)}\frac{|(f_{\epsilon}-f)(x+y)|}{|y|^{n+2\alpha}}dy+
\int_{\dR^n\setminus B_{\delta_0}(0^n)}\frac{|(f_{\epsilon}-f)(x-y)|}{|y|^{n+2\alpha}}dy\nonumber\\
&&+2\int_{\dR^n\setminus B_{\delta_0}(0^n)}\frac{|f_{\epsilon}(x)-f(x)|}{|y|^{n+2\alpha}}dy\Big)\nonumber\\
&\equiv&C_{n,\alpha} (D_{\epsilon}(x)+F_{\epsilon}(x)+2G_{\epsilon}(x)).
\end{eqnarray}
Notice that, for every $x\in\dR^n\setminus\Lambda$,
\begin{equation}
D_{\epsilon}(x)=F_{\epsilon}(x)=\int_{\dR^n\setminus B_{\delta_0}(x)}\frac{|(f_{\epsilon}-f)(y)|}{|x-y|^{n+2\alpha}}dy,
\end{equation}
and thus by Lemma \ref{l:integral-converge-mollifiers} and dominated convergence theorem,
\begin{equation}
\lim\limits_{\epsilon\to0}D_{\epsilon}(x)=\lim\limits_{\epsilon\to0}F_{\epsilon}(x)=0.
\end{equation}
On the other hand, by Lemma \ref{l:uniform-convergence-of-mollifier}, for every $x\in\dR^n\setminus \Lambda$, \begin{equation}\lim\limits_{\epsilon\to0}G_{\epsilon}(x)=0.
\end{equation}
Now the proof of (\ref{I2-converge}) is done, and hence the proof of the lemma is complete.

\end{proof}

\begin{lemma}\label{l:main-convergence-lemma}Given $0<\alpha<1$ and $\tau>0$, let $f$ satisfy Assumption \ref{a:functional-assumption} and $f\in L_{loc}^{\tau}(\dR^n)$. For every fixed $x\in\dR^n\setminus\Lambda$,  choose $\delta_0(x)\equiv\frac{1}{10^6}\cdot\min\{d_0(x,\Lambda),\delta_1\}$ such that $f(y)>10^3$ for every $y\in T_{10\delta_1}(\Lambda)\equiv\{y\in\dR^n|d_0(y,\Lambda)\leq 10\delta_1\}$. Then the following holds for every $x\in\dR^n\setminus\Lambda$:

\begin{enumerate}

\item 
\begin{equation}\lim\limits_{\epsilon\to0}
\int_{\dR^n\setminus (B_{\delta_0}(x)\cup T_{\delta_0}(\Lambda))}\frac{|(f_{\epsilon})^{\tau}(y)-f^{\tau}(y)|}{|x-y|^{n+2\alpha}}dy=0.
\end{equation}

\item 
\begin{equation}
\lim\limits_{\epsilon\to0}\int_{B_{\delta_0}(0^n)}\frac{|((f_{\epsilon})^{\tau}-f^{\tau})(x+y)+((f_{\epsilon})^{\tau}-f^{\tau})(x-y)-2((f_{\epsilon})^{\tau}-f^{\tau})(x)|}{|y|^{n+2\alpha}}dy=0.
\end{equation}

\item 
\begin{equation}
\lim\limits_{\epsilon\to0}\int_{T_{\delta_0}(\Lambda)}\frac{|(f_{\epsilon})^{\tau}(y)-f^{\tau}(y)|}{|x-y|^{n+2\alpha}}dy= 0.
\end{equation}

\item \begin{equation}
\lim\limits_{\epsilon\to0}\Big((-\triangle)^{\alpha}(f_{\epsilon})^{\tau}\Big)(x)= \Big((-\triangle)^{\alpha}f^{\tau}\Big)(x).
 \end{equation}

\item  

\begin{equation}
\lim\limits_{\epsilon\to0}\Big((-\triangle)^{\alpha}\log (f_{\epsilon})\Big)(x)= \Big((-\triangle)^{\alpha}\log f\Big)(x).
\end{equation}

\end{enumerate}

\end{lemma}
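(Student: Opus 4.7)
The plan is to prove the three regional estimates (1)--(3), then combine them to obtain (4) and (5). Write $g_\epsilon := (f_\epsilon)^\tau$ and $g := f^\tau$. The choice of $\delta_0$ ensures both that $B_{2\delta_0}(x) \subset \dR^n \setminus \Lambda$, where $f$ is smooth and bounded between two positive constants, and that $B_{\delta_0}(x) \cap T_{\delta_0}(\Lambda) = \emptyset$. Estimate (2) reduces to $C^2$ convergence $g_\epsilon \to g$ on $B_{2\delta_0}(x)$, which comes from Lemma \ref{l:derivative-converge} and the chain rule applied to the smooth, strictly positive $f$; a second-order Taylor expansion then bounds the symmetric difference by $C\,\|\nabla^2(g_\epsilon - g)\|_{L^\infty(B_{2\delta_0}(x))}|y|^2$, and since $|y|^{2-n-2\alpha}$ is locally integrable when $\alpha < 1$, dominated convergence closes the step. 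For (1), on $\Omega_1 := \dR^n \setminus (B_{\delta_0}(x) \cup T_{\delta_0}(\Lambda))$ the kernel $|x-y|^{-(n+2\alpha)}$ is locally bounded and integrable at infinity (using Assumption \ref{a:functional-assumption} together with $f \in L_{2\alpha}(\dR^n)$, which force $f \sim |y|^{-q}$ at infinity whenever $\Lambda$ is bounded); meanwhile $g_\epsilon \to g$ locally uniformly on $\Omega_1$ by Lemma \ref{l:uniform-convergence-of-mollifier} together with positivity of $f$. An exhaustion by compact subsets combined with the tail-control mechanism behind Lemma \ref{l:integral-converge-mollifiers} yields the vanishing.

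The technical heart is (3), because $g_\epsilon = (f_\epsilon)^\tau$ is not the mollification of $f^\tau$ itself. On $T_{\delta_0}(\Lambda)$ one has $|x - y| \geq d_0(x,\Lambda)/2$, so the kernel is uniformly bounded and the task reduces to showing $g_\epsilon \to g$ in $L^1(T_{\delta_0}(\Lambda))$. For $\tau \geq 1$, Jensen's inequality gives the pointwise bound $(f_\epsilon)^\tau \leq (f^\tau)_\epsilon$, and the hypothesis $f \in L^\tau_{loc}$ guarantees $(f^\tau)_\epsilon \to f^\tau$ in $L^1_{loc}$ by standard mollifier theory. Since $g_\epsilon \to g$ pointwise almost everywhere on $\dR^n \setminus \Lambda$, Fatou gives $\int g \leq \liminf \int g_\epsilon$ while the Jensen upper bound gives $\limsup \int g_\epsilon \leq \lim \int (f^\tau)_\epsilon = \int g$; this forces $\|g_\epsilon\|_{L^1(T_{\delta_0}(\Lambda))} \to \|g\|_{L^1(T_{\delta_0}(\Lambda))}$, and Scheff\'e's lemma upgrades pointwise convergence to $L^1$ convergence. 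For $0 < \tau < 1$, the hypothesis $f > 10^3$ on $T_{10\delta_1}(\Lambda) \supset T_{\delta_0}(\Lambda)$ makes $f_\epsilon$ stay uniformly above a positive constant for small $\epsilon$; then $s \mapsto s^\tau$ is Lipschitz on the relevant range, and the mean value theorem gives $|g_\epsilon - g| \leq C(\tau)\,|f_\epsilon - f|$ pointwise, reducing the claim to $L^1$-convergence of $f_\epsilon$ to $f$ on compact subsets.

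For (4), use the symmetric-difference formula of Lemma \ref{d:difference} to write $(-\triangle)^\alpha(g_\epsilon - g)(x)$ as an integral in $y$ and split at $|y| = \delta_0$. The inner piece vanishes in the limit by (2). In the outer piece, linearize the second difference into its three additive contributions: for the $(g_\epsilon - g)(x + y)$ and $(g_\epsilon - g)(x-y)$ terms, change variables $z = x \pm y$ to rewrite each as $\int_{|z-x|\geq \delta_0}(g_\epsilon - g)(z)|z-x|^{-(n+2\alpha)}\,dz$, then split by whether $z \in T_{\delta_0}(\Lambda)$ and invoke (3) and (1) respectively; the centered contribution $2(g_\epsilon - g)(x)\int_{|y|\geq \delta_0}|y|^{-(n+2\alpha)}dy$ vanishes since $f_\epsilon(x) \to f(x)$ at the smooth point $x$ by Lemma \ref{l:uniform-convergence-of-mollifier}. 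Statement (5) follows by the identical splitting with $s \mapsto \log s$ in place of $s \mapsto s^\tau$: near $\Lambda$, $\log f$ grows only like $-q\log d_0$, so it sits in $L^p_{loc}$ for every finite $p$, and the lower bound $f > 10^3$ on $T_{\delta_0}(\Lambda)$ makes $\log$ Lipschitz there, so the analogues of (1)--(3) go through without change. The main obstacle throughout is the $\tau$-power nonlinearity in (3); once that is bypassed by Jensen/Scheff\'e for $\tau \geq 1$ and the explicit lower bound on $f$ for $\tau < 1$, the remaining ingredients are essentially the mollifier calculus already developed in this section.
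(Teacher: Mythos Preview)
Your proposal is correct and largely parallel to the paper's own argument: parts (2), (4), and (5) match essentially line-for-line, and part (1) is the same dominated-convergence idea (the paper just exhibits the explicit uniform bound $|(f_\epsilon)^\tau|,|f^\tau|\leq 2^{\tau q}K_0\delta_0^{-\tau q}$ on $\dR^n\setminus T_{\delta_0}(\Lambda)$ rather than phrasing it as exhaustion plus tail control).

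The genuine difference is in part (3) for $\tau>1$. The paper uses the mean value theorem to write $|(f_\epsilon)^\tau-f^\tau|\leq \tau\max\{f_\epsilon^{\tau-1},f^{\tau-1}\}\,|f_\epsilon-f|$ and then applies H\"older with exponents $p=\tau$, $q=\tau/(\tau-1)$; the $L^q$ norm of the weight is controlled via Minkowski and the $L^\tau_{loc}$ convergence $f_\epsilon\to f$. Your route---Jensen's inequality $(f_\epsilon)^\tau\leq(f^\tau)_\epsilon$ to get a dominating sequence whose $L^1$ norm converges, followed by Fatou and Scheff\'e's lemma---is a legitimately different and arguably cleaner argument: it avoids the H\"older splitting and the separate bound on the weight, and it exploits convexity directly. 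The paper's approach, on the other hand, is more uniform in that the same mean-value framework handles both $\tau<1$ and $\tau>1$ (and the $\log$ case in (5)) with only the weight $W$ changing. Both approaches rely on the same hypotheses ($f\in L^\tau_{loc}$ and the boundedness of $T_{\delta_0}(\Lambda)$), so neither is strictly more general here.
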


\begin{proof}
$(1)$ follows from $C^0$-estimate of $f$.  By Assumption \ref{a:functional-assumption},
there exists $K_0>0$, $p>0$ such that for every $y\in\dR^n\setminus T_{\delta_0/2}(\Lambda)$, we have that
\begin{equation}
\frac{1}{K_0}\cdot (d_0(y,\Lambda))^{-\tau\cdot q }\leq f^{\tau}(y)\leq K_0\cdot (d_0(y,\Lambda))^{-\tau\cdot q}, 
\end{equation}
and thus for every $y\in\dR^n\setminus T_{\delta_0/2}(\Lambda)$
\begin{equation}
|f^{\tau}(y)|\leq 2^{\tau\cdot q}\cdot K_0\cdot\delta_0^{-\tau\cdot q}\label{c_0-estimate-away-from-limit-set}. 
\end{equation}
By the definition of $f_{\epsilon}$, it holds that for every $y\in\dR^n\setminus T_{\delta_0}(\Lambda)$,
\begin{eqnarray}
|(f_{\epsilon})^{\tau}(y)|&=&\Big|\Big(\int_{\dR^n}f(y-z)\cdot\eta_{\epsilon}(z)dz\Big)^{\tau}\Big|\nonumber\\
&=&\Big|\Big(\int_{B_{\epsilon}(0^n)}f(y-z)\cdot\eta_{\epsilon}(z)dz\Big)^{\tau}\Big|\nonumber\\
&\leq &\|f\|_{L^{\infty}(B_{\epsilon}(y))}\cdot\Big|\int_{B_{\epsilon}(0^n)}\eta_{\epsilon}(z)dz\Big|^{\tau}\nonumber\\
&=&\|f\|_{L^{\infty}(B_{\epsilon}(y))}.
\end{eqnarray}
Inequality (\ref{c_0-estimate-away-from-limit-set}) gives that for every $y\in\dR^n\setminus T_{\delta_0}(\Lambda)$, 
\begin{equation}
|(f_{\epsilon})^{\tau}(y)|\leq   2^{\tau\cdot q}\cdot K_0\cdot\delta_0^{-\tau\cdot q}.
\end{equation}

Consequently,
\begin{eqnarray}
\int_{\dR^n\setminus (B_{\delta_0}(x)\cup T_{\delta_0}(\Lambda))}\frac{|(f_{\epsilon})^{\tau}(y)-f^{\tau}(y)|}{|x-y|^{n+2\alpha}}dy&\leq&\frac{2^{\tau\cdot q+1}\cdot K_0\cdot \delta_0^{-\tau\cdot q}}{\delta_0^{\tau\cdot q}}\int_{\dR^n\setminus B_{\delta_0}(x)}\frac{dy}{|x-y|^{n+2\alpha}}\nonumber\\
&\leq& \frac{2^{\tau\cdot q}\cdot K_0\cdot\delta_0^{-\tau\cdot q-2\alpha}}{\alpha},
\end{eqnarray}
and hence dominated convergence theorem implies that for every fixed $x\in \dR^n\setminus\Lambda$,
\begin{equation}
\int_{\dR^n\setminus (B_{\delta_0}(x)\cup T_{\delta_0}(\Lambda))}\frac{|(f_{\epsilon})^{\tau}(y)-f^{\tau}(y)|}{|x-y|^{n+2\alpha}}dy\rightarrow0.\end{equation}

$(2)$ can be immediately proved by the $C^2$-derivative estimate.
\begin{eqnarray}
&&\int_{B_{\delta_0}(0^n)}\frac{|((f_{\epsilon})^{\tau}-f^{\tau})(x+y)+((f_{\epsilon})^{\tau}-f^{\tau})(x-y)-2((f_{\epsilon})^{\tau}-f^{\tau})(x)|}{|y|^{n+2\alpha}}dy\nonumber\\
&\leq &\int_{B_{\delta_0}(0^n)}\frac{\|\nabla^2((f_{\epsilon})^{\tau}-f^{\tau})\|_{L^{\infty}(B_{2\delta_0}(x))}}{|y|^{n-2+2\alpha}}dy.\label{alpha-small-ball}
\end{eqnarray}
By straightforward calculations,
\begin{equation}
\nabla^2((f_{\epsilon})^{\tau}-f^{\tau})=\tau\Big((f_{\epsilon})^{\tau-1}(\nabla^2f_{\epsilon})-f^{\tau-1}(\nabla^2f) \Big)+\tau(\tau-1)\Big(f_{\epsilon}^{\tau-2}df_{\epsilon}\otimes df_{\epsilon}-f^{\tau-2}df\otimes df\Big).
\end{equation}

The pointwise convergence property of $\nabla f_{\epsilon}$ and $\nabla^2 f_{\epsilon}$ implies that 
the integral (\ref{alpha-small-ball}) is finite.  Therefore, dominated convergence theorem gives the desired convergence.

$(3)$ follows from local integrability property, mean value theorem and H\"older's inequality. From now on, we assume that $\tau>0$ and $\tau\neq 1$ because the case $\tau=1$ has been proven in Lemma \ref{l:basic-convergence-lemma}. 
Let 
\begin{equation}
W(y)\equiv\tau\cdot \max\{|f_{\epsilon}|^{\tau-1}(y),|f|^{\tau-1}(y)\},
\end{equation}
and then by Lagrange mean value theorem, it holds that
\begin{equation}
\int_{T_{\delta_0}(\Lambda)}\frac{|(f_{\epsilon})^{\tau}(y)-f^{\tau}(y)|}{|x-y|^{n+2\alpha}}dy\leq \int_{T_{\delta_0}(\Lambda)}\frac{W(y)\cdot |f_{\epsilon}-f|(y)}{|x-y|^{n+2\alpha}}dy.
\end{equation}
Since $\delta_0(x)<\frac{d_0(x,\Lambda)}{10^6}$ and $d_0(y,\Lambda)<\delta_0$, so triangle inequality implies that\begin{equation}
|x-y|=d_0(x,y)>>\delta_0.
\end{equation}
Therefore,
\begin{equation}\int_{T_{\delta_0}(\Lambda)}\frac{|(f_{\epsilon})^{\tau}(y)-f^{\tau}(y)|}{|x-y|^{n+2\alpha}}dy\leq  \frac{1}{\delta_0^{n+2\alpha}}\int_{T_{\delta_0}(\Lambda)}W(y)\cdot |f_{\epsilon}-f|(y)dy.\label{factorize}\end{equation}

$(i)$ When $0<\tau< 1$, by definition, $|W(y)|<1$
 for every $y\in T_{\delta_0}(\Lambda)$, which implies that 
 \begin{equation}\int_{T_{\delta_0}(\Lambda)}\frac{|(f_{\epsilon})^{\tau}(y)-f^{\tau}(y)|}{|x-y|^{n+2\alpha}}dy\leq\frac{1}{\delta_0^{n+2\alpha}} \int_{T_{\delta_0}(\Lambda)}|f_{\epsilon}-f|(y)dy\to0.
 \end{equation}
 
$(ii)$  Next, we consider the case $\tau>1$.
Let $p\equiv\tau$ and $q=\frac{\tau}{\tau-1}$, then $\frac{1}{p}+\frac{1}{q}=1$. Now applying H\"older inequality to the right hand side of inequality (\ref{factorize}), we have that
\begin{eqnarray}
\int_{T_{\delta_0}(\Lambda)}\frac{|(f_{\epsilon})^{\tau}(y)-f^{\tau}(y)|}{|x-y|^{n+2\alpha}}dy\leq \frac{1}{\delta_0^{n+2\alpha}}\Big(\int_{T_{\delta_0}(\Lambda)}|W(y)|^qdy\Big)^{1/q}\cdot \Big(\int_{T_{\delta_0}(\Lambda)}|f_{\epsilon}-f|^p\Big)^{1/p}.\label{pq-integral}
\end{eqnarray}
Now we estimate the uniform bound (independent of $\epsilon$) of the integral $\int_{T_{\delta_0}(\Lambda)}|W(y)|^qdy$.
In fact, \begin{eqnarray}
\Big(\int_{T_{\delta_0}(\Lambda)}|f_{\epsilon}|^{(\tau-1)q}\Big)^{1/q}&=&\Big(\int_{T_{\delta_0}(\Lambda)}|f_{\epsilon}|^{\tau}\Big)^{(\tau-1)/\tau}\nonumber\\
&=&\Big(\int_{T_{\delta_0}(\Lambda)}|(f_{\epsilon}-f)+f|^{\tau}\Big)^{(\tau-1)/\tau}\nonumber\\
&\leq &\Big(\Big(\int_{T_{\delta_0}(\Lambda)}|f_{\epsilon}-f|^{\tau}\Big)^{1/\tau}+\Big(\int_{T_{\delta_0}(\Lambda)}|f|^{\tau}\Big)^{1/\tau}\Big)^{\tau-1}.\label{mollifier-tau} 
\end{eqnarray}
For $\epsilon\to0$, it holds that
\begin{equation}
\int_{T_{\delta_0}(\Lambda)}|f_{\epsilon}-f|^{\tau}\rightarrow0,\end{equation}
and then (\ref{mollifier-tau}) implies that 
\begin{equation}
\Big(\int_{T_{\delta_0}(\Lambda)}|f_{\epsilon}|^{(\tau-1)q}\Big)^{1/q}\leq 2^{\tau-1}\cdot\Big(\int_{T_{\delta_0}(\Lambda)}|f|^{\tau}\Big)^{(\tau-1)/\tau}.
\end{equation}
Immediately by the definition of the function $W$,  we have that \begin{equation}
\Big(\int_{T_{\delta_0}(\Lambda)}|W(y)|^{(\tau-1)q}dy\Big)^{1/q}\leq 2^{\tau-1}\cdot\Big(\int_{T_{\delta_0}(\Lambda)}|f(y)|^{\tau}dy\Big)^{(\tau-1)/\tau}.
\end{equation}

Therefore, when $\epsilon\to0$, the above computations and
(\ref{pq-integral}) give that 
\begin{equation}
\int_{T_{\delta_0}(\Lambda)}\frac{|(f_{\epsilon})^{\tau}(y)-f^{\tau}(y)|}{|x-y|^{n+2\alpha}}dy\rightarrow0.
\end{equation}
The proof of $(3)$ is done.

$(4)$ easily follows from the combination of $(1)$, $(2)$
and $(3)$.

\vspace{0.3cm}

Now we proceed to prove $(5)$. First notice that, if $f$ satisfies Assumption \ref{a:functional-assumption} and $f\in L_{loc}^{\tau}(\dR^n)$ for some $\tau>1$, then it holds that $\log f\in L_{2\alpha}(\dR^n)$ and $\log (f_{\epsilon})\in L_{2\alpha}(\dR^n)$ which means that both $(-\triangle)^{\alpha}\log f$ and $(-\triangle)^{\alpha}\log(f_{\epsilon})$ are well defined. 
To see this, for sufficiently small $\delta_0>0$, we have that 
$1<<\log f< f$ on $T_{\delta_0}(\Lambda)$ and thus $\log f\in L_{loc}^{\tau}(\dR^n)$. In particular, $\log f\in L_{loc}^{\tau}(\dR^n)$. Moreover,
by Assumption \ref{a:functional-assumption},
\begin{equation}
K^{-1}\cdot d_0^{-q}(y,\Lambda)\leq f(y)\leq K\cdot d_0^{-q}(y,\Lambda),\end{equation}
 and thus for sufficiently large $R>>1$, it holds that
 \begin{eqnarray}\int_{\dR^n\setminus B_R(0^n)}\frac{\log f(y)}{1+|y|^{n+2\alpha}}dy&\leq &
 \int_{\dR^n\setminus B_R(0^n)}\frac{C\cdot \log |y|}{|y|^{n+2\alpha}}dy\nonumber\\
 &\leq & \int_{\dR^n\setminus B_R(0^n)}\frac{C |y|^{m}}{|y|^{n+2\alpha}}dy<\infty,
 \end{eqnarray}
where the exponent $m>0$ can be chosen such that $0<m<\alpha$. Therefore, $\log f\in L_{2\alpha}(\dR^n)$. Similarly, one can show that $\log (f_{\epsilon})\in L_{2\alpha}(\dR^n)$.

With the above preliminary result, we will show the convergence property. The above arguments actually shows that for every $x\in\dR^n\setminus\Lambda$,
\begin{equation}
\lim\limits_{\epsilon\to0}
\int_{\dR^n\setminus (B_{\delta_0}(x)\cup T_{\delta_0}(\Lambda))}\frac{|\log(f_{\epsilon})(y)-\log f(y)|}{|x-y|^{n+2\alpha}}dy=0.
\end{equation}
Moreover, the convergence of the integral in $B_{\delta_0}(0^n)$ is the same as that in (2).
 Now it suffices to prove that for every $x\in\dR^n\setminus\Lambda$,
\begin{equation}
\lim\limits_{\epsilon\to0}\int_{T_{\delta_0}(\Lambda)}\frac{|(\log (f_{\epsilon}))(y)-\log f(y)|}{|x-y|^{n+2\alpha}}=0.
\end{equation}
The basic idea is the same as that in $(3)$, for the completion we still give the detailed proof here. We choose $\delta_0(x)<\frac{1}{10^6}d_0(x,\Lambda)$
such that $ f (y)>10^3$ for every $y\in T_{10\delta_0}(\Lambda)$.

It is by definition that $f(y)>0$ and $f_{\epsilon}(y)>0$. We define
\begin{equation}
W(y)\equiv\max\{(f_{\epsilon})^{-1}(y), f^{-1}(y)\},\ y\in\dR^n,\label{bound-of-W}
\end{equation}
then $0<W(y)<1$ for every $y\in\dR^n$.
So Lagrangian mean value theorem and \eqref{bound-of-W} imply that
\begin{eqnarray}
\int_{T_{\delta_0}(\Lambda)}\frac{|(\log (f_{\epsilon}))(y)-\log f(y)|}{|x-y|^{n+2\alpha}}&\leq& \delta_0^{-n-2\alpha}\int_{T_{\delta_0}(\Lambda)}|W(y)|\cdot|f_{\epsilon}(y)-f(y)|dy\nonumber\\
&\leq& \delta_0^{-n-2\alpha}\int_{T_{\delta_0}(\Lambda)}|f_{\epsilon}(y)-f(y)|dy\nonumber\\
&\overset{\epsilon\to0}{\leq}&0.
\end{eqnarray}
The proof is done.

\end{proof}

\begin{example}
 Let $(M^n,g)$ be a Kleinian manifold, then its universal cover $(\widetilde{M^n},\tilde{g})$ satisfies $\tilde{g}=e^{2w}g_0$ where $g_0$ is the Euclidean metric. Denote by $\Lambda$ the limit set of the Kleinian group $\Gamma\cong\pi_1(M^n)$. By lemma \ref{l:c_0-estimate-on-factor} and Lemma \ref{l:integrability-lemma-of-conformal-factor}, the function
\begin{equation}
f(x)\equiv e^{w(x)},\ \forall x\in\dR^n\setminus \Lambda
\end{equation}
satisfies Assumption \ref{a:functional-assumption}.
\end{example}

\begin{proposition}\label{p:exhaustion-comparison}Given $0<\alpha<1$, assume that  $f\in L_{loc}^{\tau_2}(\dR^n)$ satisfies Assumption \ref{a:functional-assumption}, Then for every $x\in\dR^n\setminus\Lambda$, the quantity
\begin{equation}
\mathcal{L}_f(\tau)(x)\equiv\frac{\Big((-\triangle)^{\alpha}f^{\tau}\Big)(x)}{\tau\cdot f^{\tau}(x)},\ \tau>0,\end{equation}
is monotone decreasing in $\tau$.
\end{proposition}

\begin{proof}Let $f_{\epsilon}$ be the standard mollifier of $f$ for every $0<\epsilon<10^{-6}$. Proposition \ref{p:harmonic-meets-scattering} shows that 
$\mathcal{L}_{f_{\epsilon}}(\tau)(x)
$ is monotone decreasing in $\tau$. Now
applying Lemma \ref{l:basic-convergence-lemma} and Lemma \ref{l:main-convergence-lemma}, we have that for every $x\in\dR^n\setminus\Lambda$,
\begin{equation}
\mathcal{L}_f(\tau)(x)=\lim\limits_{\epsilon\to0}\mathcal{L}_{f_{\epsilon}}(\tau)(x),
\end{equation}
and hence $\mathcal{L}_f(\tau)(x)$
is monotone decreasing in $\tau$.
\end{proof}

\section{Hausdorff Dimension Bound of the Limit Set}
\label{s:hausdorff-dim-estimate}

We will prove Theorem \ref{t:dim-estimate} in this section. The basic strategy is to bound the Hausdorff dimension of the limit set $\Lambda$ by applying lemma \ref{l:hausdorff-dim-estimate}. 
So the main technical ingredients in this section are  Proposition \ref{p:int-distance-estimate} and Proposition \ref{p:2-int-distance-estimate} which establish the required integral estimates on the distance function associated to the limit set $\Lambda$.

\begin{proposition}\label{p:int-distance-estimate} Given $n\geq 3$, let $(M^n,g)$ be compact locally conformally flat with positive scalar curvature and $Q_{2\gamma}>0$ for some $1<\gamma<\max\{2,n/2\}$. Denote by $\Lambda\equiv\Lambda(\Gamma)$ the limit set of the Kleinian group $\Gamma\equiv\pi_1(M^n)$. Let $0<R<+\infty$ satisfy $\Lambda\subset B_R(0^n)$, then there exists $C_0(n,\gamma,g)$ such that
\begin{equation}
\int_{B_{R}(0^n)\setminus\Lambda}d_0^{-\frac{n+2\gamma}{2}}(x,\Lambda)\leq C_0,
\end{equation}
where $d_0$ is the Euclidean distance function. 

\end{proposition}

\begin{proof}Let $\tilde{g}=e^{2w}g_0$, then by the conformal covariance property of $P_{2\gamma}$,
\begin{equation}
 P_{2\gamma}(1)=e^{-\frac{n+2\gamma}{2}w}(-\triangle)^{\gamma}(e^{\frac{n-2\gamma}{2}w}),\end{equation}
and thus by the definition of $Q_{2\gamma}$,
\begin{equation}
Q_{2\gamma}\equiv\Big(\frac{n-2\gamma}{2}\Big)^{-1}\cdot P_{2\gamma}(1)=\Big(\frac{n-2\gamma}{2}\Big)^{-1}e^{-\frac{n+2\gamma}{2}w}(-\triangle)^{\gamma}(e^{\frac{n-2\gamma}{2}w}).\label{conformal-covariance}
\end{equation}
Applying lemma \ref{l:c_0-estimate-on-factor} and equation (\ref{conformal-covariance}), we have that
\begin{eqnarray}
\int_{B_{R}(0^n)\setminus\Lambda}Q_{2\gamma}\cdot d_0^{-\frac{n+2\gamma}{2}}(x,\Lambda)
&\leq& C(n,\gamma,g)\int_{B_{R}(0^n)\setminus\Lambda}Q_{2\gamma}\cdot e^{\frac{n+2\gamma}{2}w}\nonumber\\
&=& C(n,\gamma,g)\int_{B_{R}(0^n)\setminus\Lambda}(-\triangle)^{\gamma}(e^{\frac{n-2\gamma}{2}w}).\label{bound-int-d_0-from-above}
\end{eqnarray}

We will carry out the integration by parts to estimate the integral in \eqref{bound-int-d_0-from-above}. 
Since potentially the boundary of the domain $B_R(0^n)\setminus\Lambda$ is highly singular so that
one cannot directly apply the integration by parts. We will apply the exhaustion arguments to get around this difficulty (it was used in \cite{CHY} for the estimate in the context of positive $Q_4$).  
To this end,
denote by $\alpha\equiv\gamma-1\in(0,1)$, 
\begin{equation}U(x)\equiv\Big( (-\triangle)^{\alpha}(e^{\frac{n-2\gamma}{2}w})\Big)(x),\ x\in\dR^n,\label{exhaustion-function}\end{equation} and define
\begin{equation}
\mathcal{O}_{\lambda}\equiv\Big\{x\in B_{R}(0^n)\setminus\Lambda\Big| U(x)<\lambda\Big\}.
\end{equation}
Notice that, the function $U(x)$ is smooth away from the limit set $\Lambda$. Indeed, by the definition of $U$, \begin{equation}-\triangle U=(-\triangle)^{\gamma}e^{\frac{n-2\gamma}{2}w}=\frac{n-2\gamma}{2}\cdot Q_{2\gamma}\cdot e^{\frac{n+2\gamma}{2}w}\end{equation}
and the regularity of $U$ immediately follows.
The main point is to prove that the subsets $\mathcal{O}_{\lambda}$ effectively exhausts $B_{R}(0^n)\setminus \Lambda$ when $\lambda\to+\infty$. Precisely, we will prove that \begin{equation}
B_{R}(0^n)\setminus\Lambda=\lim\limits_{\lambda\rightarrow+\infty}U_{\lambda},
\end{equation}
and for any regular value $0<\lambda<+\infty$, the closure $\overline{\mathcal{O}_{\lambda}}$ never touches $\Lambda$ i.e., it holds that there is some constant $\delta(\lambda,n,\gamma,g)>0$ such that 
\begin{equation}
d_0(x,\Lambda)\geq \delta(\lambda,n,\gamma,g),\ \forall x\in\overline{\mathcal{O}_{\lambda}}.
\end{equation}
In particular, the level set 
\begin{equation}
E_{\lambda}\equiv\Big\{x\in B_{R}(0^n)\setminus\Lambda\Big| U(x)=\lambda\Big\}
\end{equation}
is smooth and $\Lambda=\lim\limits_{\lambda\to+\infty}E_{\lambda}$.

So it suffices to prove the claim that the function $
U(x)= (-\triangle)^{\alpha}(e^{\frac{n-2\gamma}{2}w})(x)$ uniformly bounds some power of $e^{w(x)}$ from below.
The essential technical point is to apply the monotonicity result (Proposition \ref{p:exhaustion-comparison}) 
to obtain a pointwise control.
By Proposition \ref{p:exhaustion-comparison},
\begin{eqnarray}
 \Big((-\triangle)^{\alpha}(e^{\frac{n-2\gamma}{2}w})\Big)(x)&\geq& \frac{n-2\gamma}{n-2\alpha}\cdot e^{(\alpha-\gamma)w(x)}\cdot\Big( (-\triangle)^{\alpha}e^{\frac{n-2\alpha}{2}w}\Big)(x).\label{transfer}
 \end{eqnarray}
Applying the conformal covariance property of $P_{2\alpha}$, we have that
\begin{equation}
(-\triangle)^{\alpha}e^{\frac{n-2\alpha}{2}w}=\Big(\frac{n-2\alpha}{2}\Big)\cdot Q_{2\alpha}\cdot e^{\frac{n+2\alpha}{2}w}.
\end{equation}
Plugging the above equation into inequality (\ref{transfer}),
 \begin{eqnarray} (-\triangle)^{\alpha}(e^{\frac{n-2\gamma}{2}w})(x)\geq\frac{n-2\gamma}{2}\cdot e^{\frac{n+4\alpha-2\gamma}{2}w(x)}\cdot Q_{2\alpha}(x).
\end{eqnarray}
Since we have chosen $\alpha\equiv\gamma-1\in(0,1)$, immediately 
\begin{equation}
\frac{n+4\alpha-2\gamma}{2}=\frac{n-2+2\alpha}{2}>0.
\end{equation}
Moreover, by theorem 1.2 in \cite{GQ}, there is some $c_0>0$ such that \begin{equation}Q_{2\alpha}(x)\geq c_0>0,\ \forall x\in M^n\end{equation}
provided $M^n$ is compact and $R_g>0$.
Therefore,
\begin{equation}
U(x)=(-\triangle)^{\alpha}(e^{\frac{n-2\gamma}{2}w})(x)\geq c_0\cdot\frac{n-2\gamma}{2}\cdot e^{\frac{n-2+2\alpha}{2}w(x)}.\label{effective-control}\end{equation}
So the claim has been proven.

Now we are ready to implement the integration by parts.
In the above notations, it holds that $(-\triangle)^{\gamma}=(-\triangle)\circ(-\triangle)^{\alpha}$. 
Let us estimate the following integral 
over $\mathcal{O}_{\lambda}$. Let $\lambda>0$ be a generic regular value such that  the level set $E_{\lambda}$ is a smooth $(n-1)$-dimensional submanifold in $B_R(0^n)\setminus\Lambda$. Let $\nu$ be the outward unit normal vector field on $\partial \mathcal{O}_{\lambda}$, then integration by parts gives that
\begin{eqnarray}
\int_{ \mathcal{O}_{\lambda}}(-\triangle)^{\gamma}(e^{\frac{n-2\gamma}{2}w})&=&\int_{ \mathcal{O}_{\lambda}}-\triangle U\nonumber\\
&=&-\int_{\partial  \mathcal{O}_{\lambda}}\langle\nabla U, \nu\rangle\nonumber\\
&=&-\int_{E_{\lambda}}\langle\nabla U, \nu\rangle-\int_{\partial B_{R}(0^n)} \langle\nabla U, \nu\rangle.\end{eqnarray}
On the level set $E_{\lambda}$ we have that $\nu=\frac{\nabla U}{|\nabla U|}$, which implies that
\begin{eqnarray}\int_{\mathcal{O}_{\lambda}}(-\triangle)^{\gamma}(e^{\frac{n-2\gamma}{2}w})&=&-\int_{E_{\lambda}}|\nabla U|-\int_{\partial B_{R}(0^n)} \langle\nabla U, \nu\rangle\nonumber\\&\leq& C(n,\gamma,g).
\end{eqnarray}
Taking the limit $\lambda\rightarrow+\infty$, we have that
\begin{equation}
\int_{B_{R}(0^n)\setminus\Lambda}(-\triangle)^{\gamma}(e^{\frac{n-2\gamma}{2}w})=\lim\limits_{\lambda\rightarrow+\infty}\int_{ \mathcal{O}_{\lambda}}(-\triangle)^{\gamma}(e^{\frac{n-2\gamma}{2}w})\leq C(n,\gamma,g)<+\infty.
\end{equation}
Plugging the above estimate and the assumption $Q_{2\gamma}\geq c_1>0$ into (\ref{bound-int-d_0-from-above}),
\begin{equation}
\int_{B_{R}(0^n)\setminus\Lambda}d_0^{-\frac{n+2\gamma}{2}}(x,\Lambda)\leq C(n,\gamma,g) <\infty.\end{equation}
So the proof of the proposition is complete.

\end{proof}

Now we relax the assumption $Q_{2\gamma}>0$ in Proposition \ref{p:int-distance-estimate}
to $Q_{2\gamma}\geq0$, then we will prove the following.

\begin{proposition}
\label{p:2-int-distance-estimate}
Given $n\geq 3$, let $(M^n,g)$ be compact locally conformally flat with positive scalar curvature and $Q_{2\gamma}\geq0$ for some $1<\gamma<\max\{2,n/2\}$. Denote by $\Lambda\equiv\Lambda(\Gamma)$ the limit set of the Kleinian group $\Gamma\equiv\pi_1(M^n)$. Let $0<R<+\infty$ satisfy $\Lambda\subset B_R(0^n)$, then for every $\epsilon>0$, there exists $C_0(\epsilon,n,\gamma,g)>0$ such that
\begin{equation}
\int_{B_{R}(0^n)\setminus\Lambda}d_0^{-(\frac{n+2\gamma}{2}-\epsilon)}(x,\Lambda)\leq C_0(\epsilon,n,\gamma, g),\label{eps-int}
\end{equation}
where $d_0$ is the Euclidean distance function. 

\end{proposition}

\begin{remark}
The integral in \eqref{eps-int} is infinite if $\epsilon=0$, and hence the estimate \eqref{eps-int} is rather sharp. That is, $d_0$ might be not integrable around the set $\Lambda$ when $\epsilon=0$. Indeed, in Example \ref{e:hyperbolic}, $n=5$ and $\gamma=\frac{3}{2}$, by choosing the cylindrical coordinates around the circle $\Lambda\equiv\mathbb{S}^1$, we can see that the function $d_0^{-4+\epsilon}$ is integrable around $\Lambda$ for any $\epsilon>0$, while it is not integrable when $\epsilon=0$.  
\end{remark}

\begin{proof}
Similar to the proof of Proposition \ref{p:int-distance-estimate}, we will prove the estimate \eqref{eps-int} by applying
the exhaustion arguments.
The conformal covariance property of the operator $P_{2\gamma}$ implies that\begin{equation}
(-\triangle)^{\gamma}e^{\frac{n-2\gamma}{2}w}=\Big(\frac{n-2\gamma}{2}\Big)\cdot Q_{2\gamma}\cdot e^{\frac{n+2\gamma}{2}w}.\label{p-conformal-cov}
\end{equation}
Let $\alpha\equiv\gamma-1\in(0,1)$ the function
$
U(x)$ was defined by \eqref{exhaustion-function} and the behavior of its sublevel set $\mathcal{O}_{\lambda}$ was studied in the proof of Proposition \ref{p:int-distance-estimate}, which enables us to implement the integration by parts as follows. 
Since it has been shown in the proof of Proposition \ref{p:int-distance-estimate} that the smooth sublevel set $\mathcal{O}_{\lambda}$ effectively exhausts $B_R(0^n)\setminus\Lambda$ as $\lambda\to+\infty$ and level set $E_{\lambda}$ approximates $\Lambda$, it suffices to prove the integral  estimate on the open set $\mathcal{O}_{\lambda}$ for sufficiently large $\lambda$. 

For every $\beta\in\dR$, by \eqref{p-conformal-cov} and integration by parts,
\begin{eqnarray} 
0&\leq&(\frac{n-2\gamma}{2})\int_{\mathcal{O}_{\lambda}}Q_{2\gamma}\cdot e^{(\frac{n+2\gamma}{2}+\beta)w}\nonumber\\&=&\int_{\mathcal{O}_{\lambda}}e^{\beta w}\cdot(-\triangle)^{\gamma}e^{\frac{n-2\gamma}{2}w}\nonumber\\
&=&\int_{\mathcal{O}_{\lambda}}e^{\beta w}\cdot(-\triangle U)\nonumber\\
&=&-\int_{\mathcal{O}_{\lambda}}\triangle(e^{\beta w})
\cdot U+\int_{E_{\lambda}}U\cdot\Big\langle\nabla e^{\beta w},\frac{\nabla U}{|\nabla U|}\Big\rangle-\int_{E_{\lambda}}e^{\beta w}|\nabla U| +\int_{\partial B_{R}(0^n)}B,\label{beta-int-by-parts}
\end{eqnarray}
where 
\begin{equation}
B\equiv U\cdot\Big\langle\nabla e^{\beta w},\frac{\partial}{\partial r}\Big\rangle-e^{\beta w}\Big\langle\nabla U,\frac{\partial}{\partial r}\Big\rangle.
\end{equation}
Immediately,
\begin{equation}
\Big|\int_{\partial B_{R}(0^n)}B\Big|\leq C(\beta, n, \gamma, g).\label{bound-on-B}
\end{equation}

Next we will prove the claim that if $\beta<0$, then
\begin{equation}
\int_{E_{\lambda}}U\cdot\Big\langle\nabla e^{\beta w},\frac{\nabla U}{|\nabla U|}\Big\rangle< 0.\label{negativity-of-product}
\end{equation}
It is straightforward that for $\beta<0$ and $R_g>0$,
\begin{equation}
\triangle (e^{\beta w})=-\beta\cdot\frac{R_g}{6}\cdot e^{(\beta+2)w}
+\beta\Big(\beta-\frac{n-2}{2}\Big)|\nabla w|^2e^{\beta w}>0.\label{laplace-beta}
\end{equation}
Now we define the open set for $\mu>0$ and $\lambda>0$,
\begin{equation}
\Sigma_{\lambda,\mu}\equiv\{x\in B_R(0^n)\setminus\Lambda|w(x)<\mu,\ U(x)>\lambda\}.
\end{equation}
Let $F_{\mu}\equiv\{x\in B_R(0^n)\setminus\Lambda|w(x)=\mu\}$ and hence 
$\Sigma_{\lambda,\mu}$ is smooth for generic $\lambda$, $\mu$ and 
\begin{equation}
\partial\Sigma_{\lambda,\mu}=F_{\mu}\cup E_{\lambda}
\end{equation}
We will prove \eqref{negativity-of-product} by the integration by parts on the domain $\Sigma_{\lambda,\mu}$. In fact,
\begin{eqnarray}
0<\int_{\Sigma_{\lambda,\mu}}\triangle(e^{\beta w})&=&\int_{F_{\mu}}\langle\nabla e^{\beta w},\nu\rangle+\int_{E_{\lambda}}\langle\nabla e^{\beta w},\nu\rangle\nonumber\\
&=&\beta\int_{F_{\mu}}e^{\beta w}|\nabla w|-\int_{\Sigma_{\lambda}}\Big\langle\nabla e^{\beta w},\frac{\nabla U}{|\nabla U|}\Big\rangle,
\end{eqnarray}
where $\nu$ is the outward normal vector field. 
Since $\beta<0$, so inequality \eqref{negativity-of-product} immediately follows.
Next, by \eqref{negativity-of-product}, 
\eqref{bound-on-B} and 
\eqref{beta-int-by-parts}, it holds that
\begin{equation}
\int_{\mathcal{O_{\lambda}}}\triangle(e^{\beta w})
\cdot U\leq C(\beta,n,\gamma,g).\label{bound-on-product}
\end{equation}
Since we have proved in Proposition \ref{p:int-distance-estimate} that (see \eqref{effective-control}),
\begin{equation}
U(x)=(-\triangle)^{\alpha}(e^{\frac{n-2\gamma}{2}w})(x)\geq c_0\cdot\frac{n-2\gamma}{2}\cdot e^{\frac{n-2+2\alpha}{2}w(x)}.\end{equation}
Now combined with  
\eqref{laplace-beta}, then for every $\beta>0$, there are constants $C_1(\beta,n,\gamma,g)>0$ and $C_2(\beta,n,\gamma,g)>0$ such that
\begin{equation}
\triangle (e^{\beta w})\cdot U\geq C_1\cdot e^{(\frac{n+2+2\alpha}{2}+\beta)w}+C_2\cdot |\nabla w|^2\cdot e^{\frac{n-2+2\alpha+2\beta}{2} w}.\label{pointwise-comparison}
\end{equation}
Therefore, let $\beta=-\epsilon$, then by \eqref{bound-on-product} and \eqref{pointwise-comparison},
\begin{equation}
\int_{\mathcal{O}_{\lambda}}
e^{(\frac{n+2\gamma}{2}-\epsilon)w}\leq C(\epsilon,n,\gamma,g).
\end{equation}
So we can finish the proof by applying the exhaustion property and lemma \ref{l:c_0-estimate-on-factor}.

\end{proof}

With the above technical preparations, we are in a position to prove Theorem \ref{t:dim-estimate}.

\begin{proof}
[Proof of Theorem \ref{t:dim-estimate}] First we prove the first case which states that $\dim_{\mathcal{H}}(\Lambda)<\frac{n-2\gamma}{2}$ if $Q_{2\gamma}>0$.
 By Proposition \ref{p:int-distance-estimate} and lemma \ref{l:hausdorff-dim-estimate}, we have that
\begin{equation}
\dim_{\mathcal{H}}(\Lambda)\leq\frac{n-2\gamma}{2}.
\end{equation}
Then standard continuity argument gives that there exists $\epsilon>0$ such that for every $\gamma'\in(\gamma,\gamma+\epsilon)$ it holds that $Q_{2\gamma'}>0$ which implies that
\begin{equation}
\dim_{\mathcal{H}}(\Lambda)\leq\frac{n-2\gamma'}{2}<\frac{n-2\gamma}{2}.
\end{equation}

Next we will prove that $\dim_{\mathcal{H}}(\Lambda)\leq\frac{n-2\gamma}{2}$ if the assumption is relaxed to $Q_{2\gamma}\geq0$. In this case, applying Proposition \ref{p:2-int-distance-estimate} and lemma \ref{l:hausdorff-dim-estimate}, we obtain that, for every $\epsilon>0$,
\begin{equation}
\dim_{\mathcal{H}}(\Lambda)\leq\frac{n-2\gamma}{2}+\epsilon,
\end{equation}
which implies that
\begin{equation}
\dim_{\mathcal{H}}(\Lambda)\leq\frac{n-2\gamma}{2}
\end{equation}
by taking $\epsilon\to0$.

So the proof is done.
\end{proof}

\section{The Critical Case: $Q_3$ Curvature on $3$-Dimensional Manifolds}\label{s:critical-case}

In this section, we will study 
the limit set $\Lambda(\Gamma)$
in the critical case that $2\gamma=n=3$. 
We will prove Theorem \ref{t:3d-rigidity-theorem} which gives a geometric rigidity result in the critical case. 
The basic idea of the proof is similar to that of Theorem \ref{t:dim-estimate}.
The new point is Proposition \ref{p:limiting-exhaustion-comparison} which can be viewed as a limiting version of Proposition \ref{p:exhaustion-comparison}.

\begin{proposition}\label{p:limiting-exhaustion-comparison} Let $\Lambda\subset\dR^3$
be a closed subset of zero measure and let $e^w$ satisfy Assumption \ref{a:functional-assumption}.
Then for every $x\in\dR^3\setminus\Lambda$,
\begin{equation}
((-\triangle)^{1/2}w)(x)\geq e^{-w(x)}((-\triangle)^{1/2}e^{w})(x).\label{limiting-monotonicity}
\end{equation}
\end{proposition}

\begin{proof}Denote by $f\equiv e^{w}$ and let $f_{\epsilon}$ the standard mollifier of $f$ in the sense of (\ref{def-mollifier}).
Let $1<\gamma<3/2$, by Proposition \ref{p:harmonic-meets-scattering}, then for every $x\in\dR^3$
\begin{equation}
\frac{\tau_2}{\tau_1}\cdot \Big((-\triangle)^{\gamma-1}(f_{\epsilon})^{\tau_1}\Big)(x)\geq (f_{\epsilon})^{\tau_1-\tau_2}(x)\cdot\Big((-\triangle)^{\gamma-1}(f_{\epsilon})^{\tau_2}\Big)(x),\label{ineq-before-taking-limit}
\end{equation}
where $\tau_1=\frac{3-2\gamma}{2}$ and $\tau_2=\frac{5-2\gamma}{2}$.
At the first stage, we will prove that \begin{equation}
\Big((-\triangle)^{1/2}(\log (f_{\epsilon}))\Big)(x)\geq (f_{\epsilon})^{-1}(x)\cdot(-\triangle)^{1/2}(f_{\epsilon})(x).\label{smooth-limiting-exhaustion-estimate}
\end{equation}

To this end, let $\gamma\to\frac{3}{2}$ then $\tau_1\to0$ and $\tau_2\to1$, by \eqref{ineq-before-taking-limit}, so it turns out that
\begin{eqnarray}
\lim\limits_{\gamma\to\frac{3}{2}}\frac{\tau_2}{\tau_1}\cdot \Big((-\triangle)^{\gamma-1}(f_{\epsilon})^{\tau_1}\Big)(x)\geq (f_{\epsilon})^{-1}(x)\cdot(-\triangle)^{1/2}(f_{\epsilon})(x).\label{approaching-to-the-critical}
\end{eqnarray}
Notice that, the left hand side can be written as
 \begin{eqnarray}
\lim\limits_{\gamma\to\frac{3}{2}}\frac{\tau_2}{\tau_1}\cdot \Big((-\triangle)^{\gamma-1}(f_{\epsilon})^{\tau_1}\Big)(x)&=&\lim\limits_{\gamma\to\frac{3}{2}} (-\triangle)^{\gamma-1}\Big(\frac{\tau_2}{\tau_1}\cdot ((f_{\epsilon})^{\tau_1}-1)\Big)(x).
 \end{eqnarray}
It follows from  the Taylor expansion in $\tau_1$ that 
\begin{equation}(f_{\epsilon})^{\tau_1}-1=e^{\tau_1\cdot\log(f_{\epsilon})}-1=\tau_1\cdot\log (f_{\epsilon})+\sum\limits_{k=2}^{\infty}\frac{(\tau_1\log (f_{\epsilon}))^k}{k!},\end{equation} which implies that
\begin{equation}
\lim\limits_{\gamma\to\frac{3}{2}}\frac{\tau_2}{\tau_1}\cdot ((f_{\epsilon})^{\tau_1}-1)=\log (f_{\epsilon}).
\end{equation}
Consequently,
\begin{equation}
 \lim\limits_{\gamma\to\frac{3}{2}}\frac{\tau_2}{\tau_1}\cdot \Big((-\triangle)^{\gamma-1}(f_{\epsilon})^{\tau_1}\Big)(x)=\Big((-\triangle)^{1/2}\log (f_{\epsilon})\Big)(x).
\end{equation}
Then (\ref{approaching-to-the-critical}) implies that,
\begin{equation}
\Big((-\triangle)^{1/2}\log (f_{\epsilon})\Big)(x)\geq   (f_{\epsilon})^{-1}(x)\cdot(-\triangle)^{1/2}(f_{\epsilon})(x).
\end{equation}
So the proof of (\ref{smooth-limiting-exhaustion-estimate}) is done.

Next, applying Lemma \ref{l:basic-convergence-lemma} and (5) in Lemma \ref{l:main-convergence-lemma}, we have that for every $x\in\dR^3\setminus\Lambda$,
\begin{equation}
((-\triangle)^{1/2}w)(x)\geq e^{-w(x)}((-\triangle)^{1/2}e^{w})(x).
\end{equation}
Now the proof of the proposition is complete.
\end{proof}

As what we did in Section \ref{s:hausdorff-dim-estimate}, we are ready to prove Theorem \ref{t:3d-rigidity-theorem}.

\begin{proof}
[Proof of Theorem \ref{t:3d-rigidity-theorem}] 
The proof of the theorem consists of the following two primary steps.

The first step is to prove that 
$\dim_{\mathcal{H}}(\Lambda(\Gamma))=0
$ under the assumption of $R_g>0$ and $Q_3\geq0$ on $(M^3,g)$.
By lemma \ref{l:c_0-estimate-on-factor} and lemma \ref{l:hausdorff-dim-estimate}, it suffices to show that for every $\epsilon>0$,
\begin{equation}
\int_{B_{R}(0^3)\setminus\Lambda}e^{(3-\epsilon)w(x)}dx\leq C(\epsilon,g).\label{3d-conformal-factor-estimate}
\end{equation}

We will prove \eqref{3d-conformal-factor-estimate} by the similar exhaustion arguments and the integration by parts as in the proof of Proposition \ref{p:2-int-distance-estimate}. 
The crucial point is to effectively approximate $B_R(0^3)\setminus\Lambda$ by a smooth domain.
Precisely, define
\begin{equation}
U(x)\equiv\Big((-\triangle)^{1/2}w\Big)(x), \  x\in\dR^3,
\end{equation}
and given $\lambda\in\dR$ we define
\begin{equation}
\mathcal{O}_{\lambda}\equiv\Big\{x\in B_R(0^3)\setminus\Lambda\Big| U(x)<\lambda\Big\}
\end{equation}
and
\begin{equation}
E_{\lambda}\equiv\{x\in B_R(0^3)\setminus\Lambda\Big| U(x)=\lambda\}.
\end{equation}
So we will show that $B_R(0^3)\setminus\Lambda$ can be effectively approximated by $\mathcal{O}_{\lambda}$. To establish such an approximation, from the proof of Proposition \ref{p:int-distance-estimate} and Proposition \ref{p:2-int-distance-estimate}, it suffices to prove the following pointwise estimate: there exists a constant $c_0(g)>0$ such that for every $x\in\dR^3\setminus\Lambda$,
\begin{equation}
U(x)\geq c_0e^{w(x)}.\label{3d-pointwise-bound}
\end{equation}
The proof of \eqref{3d-pointwise-bound} follows from the limiting monotonicity inequality \eqref{limiting-monotonicity}.
In fact,
let $\hat{g}$ be the Riemannian metric on the universal cover of $M^3$, and thus $\hat{g}=e^{2w}g_0$. Denote by $\widehat{Q}_1\equiv Q_{1,\hat{g}}$, then the conformal covariance property implies that 
\begin{equation}
\widehat{Q}_1\cdot e^{2w}=(-\triangle)^{1/2}e^w.
\end{equation}
On the other hand, by Proposition \ref{p:limiting-exhaustion-comparison}, for every $x\in\dR^3\setminus\Lambda$, 
\begin{equation}
((-\triangle)^{1/2}w)(x)\geq e^{-w(x)}((-\triangle)^{1/2}e^w)(x).
\end{equation}
Combining the above inequalities, 
\begin{equation}
((-\triangle)^{1/2}w)(x)\geq  e^{-w(x)}\cdot(-\triangle)^{1/2}(e^w)(x)=e^{w(x)}\cdot \widehat{Q}_1(x).
\end{equation}
Since $R_g>0$, applying a result in \cite{GQ}, we have that $Q_1>0$ everywhere on $M^3$. By assumption, $M^3$ is compact and so there is some $c_0(g)>0$ such that 
\begin{equation}
Q_1(x)\geq c_0>0,\ \forall x\in M^3,
\end{equation}
which implies that 
\begin{equation}
\widehat{Q}_1(x)\geq c_0>0,\ \forall x\in \widetilde{M}^3.
\end{equation}
Therefore,
\begin{equation}
U(x)=((-\triangle)^{1/2}w)(x)\geq c_0\cdot e^{w(x)}.
\end{equation}

With the above estimate  we are in a position to carry out the integration by parts to prove inequality (\ref{3d-conformal-factor-estimate}).
Applying the condition $Q_3\geq0$ and the same argument as in the proof of Proposition \ref{p:2-int-distance-estimate}, we have that for every $0<\epsilon<1$,
\begin{eqnarray}
\int_{\mathcal{O_{\lambda}}}\triangle(e^{-\epsilon w})
\cdot U\leq C(\epsilon,n,\gamma,g).\label{3d-bound-on-product}
\end{eqnarray}
Next, since
\begin{equation}
\triangle (e^{-\epsilon w})=\epsilon\cdot\frac{R_g}{6}\cdot e^{(2-\epsilon)w}
+\epsilon\Big(\frac{n-2}{2}-\epsilon\Big)|\nabla w|^2e^{-\epsilon w},\label{laplace-beta}
\end{equation}
and
\begin{equation}
U(x)\geq c_0\cdot e^{w},
\end{equation}
so plugging the above into inequality \eqref{3d-bound-on-product},
\begin{equation}
\int_{\mathcal{O}_{\lambda}}e^{(3-\epsilon)w}\leq C(\epsilon,g)
\end{equation}
and thus \eqref{3d-conformal-factor-estimate} has been proved. Therefore, \begin{equation}\dim_{\mathcal{H}}(\Lambda)=0.\label{0-dim}\end{equation}
So we finish the proof of the first step.

The second step is to analyze the geometry of $(M^3,g)$ provided \eqref{0-dim}. An immediate consequence of \eqref{0-dim} is that $\Gamma\equiv\pi_1(M^3)$ is elementary. Since we have assumed $R_g>0$, so $M^3$ is never covered by $\mathbb{T}^3$. Moreover, $\Gamma$ is torsion-free, so
there are two cases:
\begin{enumerate}
\item 
 $M^3$ is conformal to the round sphere $(\mathbb{S}^3,g_1)$ of constant curvature $1$. The corresponding hyperbolic filling-in is $(\dH^4,g_+)$.

\item $M^3$ is conformal to the Riemannian product space 
$(\mathbb{S}^1\times\mathbb{S}^2, dt^2\oplus h_1)$ with the metric $h_1$ of constant Gaussian curvature $1$.
The  corresponding hyperbolic filling-in associated to the latter case is $(\dR^3\times\mathbb{S}^1,g_+)$.
\end{enumerate}

Next we will apply Chern-Gauss-Bonnet formula in the context of conformally compact Einstein manifolds to 
obtain the desired rigidity. In our context, $(X^4,g_+)$ is a hyperbolic manifold with a conformal infinity $(M^3,g)$ such that $Q_3(g,g_+)\geq0$, 
then Chern-Gauss-Bonnet formula \eqref{chern-gauss-bonnet} implies that
\begin{equation}
4\pi^2\chi(X^4)=\int_{M^3}Q_3\dvol_g.
\end{equation}
First, if $Q_3(x)>0$ for some $x\in M^3$, then
\begin{equation}
\int_{M^3}Q_3\dvol_g>0
\end{equation}
and hence $\chi(X^4)>0$. So it corresponds to case (1). Now we focus on the case $Q_3\equiv0$ on $M^3$, then immediately $\chi(X^4)=0$ and this corresponds to case (2).
As the last step, we will show that $(M^3,g)$
is isometric to $(\mathbb{S}^1\times\mathbb{S}^2, dt^2\oplus h_r)$ for a round metric $h_r$ on $\mathbb{S}^2$. We have proved that $(M^3,g)$ is conformal to the Riemannian product space $(\mathbb{S}^1\times\mathbb{S}^2, \sigma_r)$ with $\sigma_r\equiv dt^2\oplus h_r$, so we can write 
\begin{equation}
g=e^{2w}\sigma_r,
\end{equation}
and then the following conformal covariance property holds,
\begin{equation}
P_3[\sigma_r](w)+Q_3(\sigma_r)=Q_3(g)e^{3w}.
\end{equation}
Since $Q_3(\sigma_r)\equiv0$ and by assumption $Q_3(g)\equiv0$, so 
\begin{equation}
P_3[\sigma_r](w)\equiv0.
\end{equation}
By theorem 1.2 in \cite{CaCh}), in our case $Q_3(\sigma_r)\equiv0$ and $R_{\sigma_r}>0$, so the operator $P_3[\sigma_r]$ is nonnegative and $\Ker(P_3[\sigma_r])=\dR$. Therefore $w$ is a constant and $g$ is a rescaling of the standard metric $\sigma_r$.
 So the proof is done.

\end{proof}

\section{Examples}

We will present in this section several examples with concrete Kleinian groups and we will compute the associated nonlocal curvature $Q_{2\gamma}$. 
In addition, we will give an example which shows both the estimate in Theorem \ref{t:dim-estimate} and the assumption in 
Theorem \ref{t:topological-rigidity} are in fact optimal.
Moreover, we will show that Theorem \ref{t:3d-rigidity-theorem}
does not hold in the general setting of conformally compact Einstein manifolds.

Example \ref{e:Q-curvature-sphere} and Example \ref{e:Q-curvature-quotient-of-cylinder} are rather standard 
spaces with positive $Q_{2\gamma}$
curvature and the associated limit set is isolated. The detailed computations can be found in 
in \cite{DG}.

\begin{example}[Standard sphere]\label{e:Q-curvature-sphere}
Given any $n\geq 3$, consider the sphere 
$(\mathbb{S}^{n},g_1)$ with $g_1=(\frac{2}{1+|x|^2})^2g_0$. Immediately, $\sec_{g_1}\equiv 1$ and $R_{g_1}\equiv n(n-1)$. 
Then for every  $\gamma\in(0,n/2)$, by the conformal covariance property of $P_{2\gamma}$, we can compute that
\begin{equation}P_{2\gamma}(1)\equiv\frac{\Gamma(\frac{n+2\gamma}{2})}{\Gamma(\frac{n-2\gamma}{2})}>0.\end{equation}
It follows that for every $\gamma\in(0,n/2)$,
\begin{equation}
Q_{2\gamma}\equiv(\frac{n-2\gamma}{2})^{-1}P_{2\gamma}(1)>0.
\end{equation}
Let $n=3$ and $\gamma=\frac{n}{2}=\frac{3}{2}$,
\begin{equation}
Q_{3}\equiv\lim\limits_{\gamma\to 3/2}(\frac{3-2\gamma}{2})^{-1}P_{2\gamma}(1)=\lim\limits_{\gamma\to 3/2}\frac{\Gamma(\frac{3+2\gamma}{2})}{\frac{3-2\gamma}{2}\cdot\Gamma(\frac{3-2\gamma}{2})}=\Gamma(3)=2.
\end{equation}

\end{example}

\begin{example}[Standard cylinder]\label{e:Q-curvature-quotient-of-cylinder}
Fix any $n\geq 3$, consider the compact manifold  $(M^n,h)\equiv(\mathbb{S}^{n-1}\times\mathbb{S}^{1},h)$ with the standard product metric such that $R_h\equiv(n-1)(n-2)$. We consider the case that the filling-in of $\mathbb{S}^{n-1}\times\mathbb{S}^1$ is the hyperbolic manifold $\dH^{n+1}/\dZ$ which is diffeomorphic to $\dR^{n}\times\mathbb{S}^1$. To understand the behavior of $P_{2\gamma}$, we look at the universal cover
$(\widetilde{M^n},\tilde{h})\equiv(\mathbb{S}^{n-1}\times\dR^1,\tilde{h})$, where the covering metric $\tilde{h}$
can be written in terms of polar coordinates in $\dR^n$,
$\tilde{h}\equiv\frac{g_{\dR^n}}{r^2}$.
So standard computations imply that for every $\gamma\in(0,n/2)$, \begin{equation}
P_{2\gamma}(1)= 2^{2\gamma}\frac{\Gamma^2(\frac{n+2\gamma}{4})}{\Gamma^2(\frac{n-2\gamma}{4})}>0,
\end{equation}
and hence by definition,
\begin{equation}
Q_{2\gamma}\equiv(\frac{n-2\gamma}{2})^{-1}P_{2\gamma}(1)>0,\ \gamma\in(0,n/2).
\end{equation}
Then
it follows that in the critical case, namely, $n=3$ and $\gamma=\frac{3}{2}$,
\begin{equation}
Q_{3}\equiv\lim\limits_{\gamma\to 3/2}(\frac{3-2\gamma}{2})^{-1}P_{2\gamma}(1)=\lim\limits_{\gamma\to 3/2}\frac{2^{2\gamma}\cdot \Gamma^2(\frac{3+2\gamma}{4})}{\frac{3-2\gamma}{2}\cdot\Gamma^2(\frac{3-2\gamma}{4})}=0.
\end{equation}

\end{example}

In the next example, the Kleinian group $\Gamma$ has limit set $\Lambda(\Gamma)$ of positive Hausdorff dimension. From this example, one can see that the dimension estimate in Theorem \ref{t:dim-estimate} is sharp, and the lower bound $\gamma=\frac{3}{2}$ in Theorem \ref{t:topological-rigidity} is also optimal.

\begin{example}[Limit set of positive dimension]\label{e:hyperbolic}
Let $(M^5,\omega)\equiv (\mathbb{S}^3\times \Sigma^2, g_1\oplus g_{\Sigma^2})$ be a Riemannian product space,
where $(\mathbb{S}^3,g_1)$ is the $3$-sphere with $\sec_{g_1}\equiv 1$ and  $(\Sigma^2,h_{\Sigma^2})$ is a closed hyperbolic surface with $\sec_{h_{\Sigma^2}}\equiv -1$.
Then by standard calculations, the scalar curvature is constant $R_{\omega}\equiv 4$ and $(M^5,\omega)$ is a locally conformally flat manifold. Indeed, the universal cover $(\widetilde{M^5},\tilde{\omega})\equiv(\mathbb{S}^3\times\mathbb{H}^2$, $g_1\oplus h_{-1})$ has
$\sec_{g_1}\equiv1$ and $\sec_{h_{-1}}\equiv-1$, which implies that
\begin{equation}\tilde{\omega}=g_1\oplus h_{-1}=g_1+\frac{dy^2+dx^2}{y^2}=\frac{g_{\dR^5}}{y^2}, \ y>0.\label{conformal-to-Euclidean}
\end{equation}
Therefore, $(\widetilde{M^5},\tilde{\omega})$
is conformally flat and $\widetilde{M^5}\overset{homeo}{\cong}\mathbb{S}^5\setminus\mathbb{S}^1$.
It is a standard fact from hyperbolic geometry that $\Gamma\equiv\pi_1(M^5)=\pi_1(\Sigma^2)$ is actually of Fuchsian type
such that \begin{equation}\dim_{\mathcal{H}}(\Lambda(\Gamma))=1=\frac{5-3}{2},\end{equation} and $M^5\overset{homeo}{\cong}(\mathbb{S}^5\setminus\mathbb{S}^1)/\Gamma$. 

Next, the universal covering space $(\widetilde{M^5},\tilde{\omega})$
can be explicitly written as the conformal infinity of the $6$-dimensional hyperbolic space 
$(\mathbb{H}^{6},g_{\mathbb{H}^6})$ with $\sec_{g_{\mathbb{H}^6}}\equiv-1$. In fact, $g_{\mathbb{H}^6}$  can be represented as follows,
\begin{equation}
g_{wp}=ds^2+\sinh^2(s)g_1+\cosh^2(s)g_{-1},
\end{equation}
where $g_1$ is the metric on $\mathbb{S}^3$ with $\sec_{g_{1}}\equiv1$ and $g_{-1}$ is the metric on $\mathbb{H}^2$ with $\sec_{g_{-1}}\equiv-1$. 
Under the coordinate change $s=-\log(\frac{r}{2})$, then we have that
\begin{equation}
g_{wp}=\frac{dr^2+(1-\frac{r^2}{4})^2g_{1}+(1+\frac{r^2}{4})^2g_{-1}}{r^2}.
\end{equation}
Indeed, with the above geodesic defining function $r$, $(\widetilde{M^5},\widetilde{\omega})$
can be viewed as the conformal infinity of $(\mathbb{H}^6,g_{-1})$.

Now we are in a position to understand the fractional order curvatures of $(M^5,\omega)$.
By the standard computations in Fourier transform (see \cite{GonMaSi} for details), for the above conformal factor $y$ in (\ref{conformal-to-Euclidean}), one can see that for $0<\gamma<\frac{3}{2}$,
\begin{equation}
(-\triangle)^{\gamma}y^{-\frac{5-2\gamma}{2}}=c_0(\gamma)\cdot y^{-\frac{5+2\gamma}{2}},\end{equation}
where 
\begin{equation}
c_0(\gamma)\equiv \frac{\pi^{2\gamma-4}}{16}\cdot\frac{\Gamma(\frac{3+2\gamma}{4})\cdot\Gamma(\frac{5+2\gamma}{4})}{\Gamma(\frac{5-2\gamma}{4})\cdot\Gamma(\frac{3-2\gamma}{4})}.
\end{equation}
Since $\tilde{\omega}=\frac{g_{\dR^5}}{y^2}$, by the conformal covariance property of $P_{2\gamma}\equiv P_{2\gamma}[\tilde{\omega}]$, we have that
 \begin{equation}P_{2\gamma}(1)=y^{\frac{5+2\gamma}{2}}(-\triangle)^{\gamma}y^{-\frac{5-2\gamma}{2}}=c_0(\gamma),\  \forall 0<\gamma<\frac{3}{2}.\end{equation} 
 Hence by definition,  \begin{equation}Q_{2\gamma}\equiv\frac{2c_0(\gamma)}{5-2\gamma}>0,\ \forall 0<\gamma<\frac{3}{2},\end{equation} which implies that \begin{equation}Q_3=\lim\limits_{\gamma\to3/2}Q_{2\gamma}=\lim\limits_{\gamma\to3/2}c_0(\gamma)=0.\end{equation}
Generally, by Theorem \ref{t:dim-estimate}, for every $\frac{3}{2}\leq \gamma<2$, 
$M^5$ does not admit any conformally flat metric with $Q_{2\gamma}>0$ and $R>0$.  Moreover, Chang-Hang-Yang's work in \cite{CHY} shows that $M^5$ does not admit any locally conformally flat metric 
with $R_h>0$ and $Q_4>0$. 

\end{example}

\begin{example}
[AdS-Schwarzschild Space] Consider the AdS-Schwarzschild space $(\dR^2\times\mathbb{S}^2,g_+)$\label{e:ads-schwarzschild}
with
\begin{equation}
g_+=\frac{dr^2}{V(r)}+V(r)dt^2+r^2g_{\mathbb{S}^2},
\end{equation}
where
\begin{equation}
V(r)\equiv 1+r^2-\frac{2m}{r},\ m>0.
\end{equation}
Notice that $g_+$ is a hyperbolic metric with $\sec_{g_+}\equiv-1$ only if $m=0$.
So for each $m>0$,  the space $(\dR^2\times\mathbb{S}^2,g_+)$ is an Einstein manifold (but non-hyperbolic) with constant Ricci curvature $-3$. Moreover, its conformal infinity is diffeomorphic to $\mathbb{S}^1\times\mathbb{S}^2$.

Now we consider a special case $0<m< 1$, then the compactified space given by the conformal metric $r^{-2}g_+$ is diffeomorphic to $\dD^2\times\mathbb{S}^2$ such that the boundary is isometric to $M^3\equiv\mathbb{S}^1(\lambda)\times\mathbb{S}^2$, where $\mathbb{S}^1(\lambda)$ is the circle of radius $\lambda$ and $\mathbb{S}^2$ is the round sphere with constant curvature $1$.  

We will show that the nonlocal curvature $Q_3$ is a positive constant on the above $M^3$ associated to the metric $g_+$ with $0<m<1$. 
$Q_3$ can be computed in terms of the renormalized volume $\mathcal{V}(\dR^2\times\mathbb{S}^2,g_+)$ in this special case. Indeed, a classical formula due to C. Fefferman and R. Graham \cite{FG} shows that
\begin{equation}
3\mathcal{V}(\dR^2\times\mathbb{S}^2,g_+)=\int_{\mathbb{S}^{1}(\lambda)\times\mathbb{S}^2}Q_3.
\end{equation}
On the one hand, explicit computations imply that $\mathcal{V}(\dR^2\times\mathbb{S}^2,g_+)>0$ if $0<m<1$ (see \cite{CQY2} for more details of the computations).
This implies that
\begin{equation}
\int_{\mathbb{S}^1(\lambda)\times\mathbb{S}^2}Q_3>0.
\end{equation}
On the other hand, the symmetry property of $g_+$ actually implies that $Q_3$ is a constant on the conformal infinity. Therefore, $Q_3$ is a positive constant.  

So this example shows that for $(\mathbb{S}^1\times \mathbb{S}^2,h)$, there exists 
a conformally compact Einstein filling-in $(\dR^2\times \mathbb{S}^2,g_+)$
for which it holds that $Q_3(h,g_+)>0$ and $R_h>0$. It also tells us that case (1) in Theorem \ref{t:3d-rigidity-theorem} does not hold in the general setting of conformally compact Einstein spaces.
\end{example}

{\bf Acknowledgements.} The author is grateful to Alice Chang for many valuable discussions. The author 
should also thank Jeffrey Case for a lot of stimulating communications, especially for the discussions about Example \ref{e:hyperbolic} and the paper \cite{CaCh}.

\end{document}